\documentclass[12pt]{article}
\usepackage{amsmath,amssymb,amsbsy,amsfonts,amsthm,latexsym,
            amsopn,amstext,amsxtra,euscript,amscd,amsthm,url}
\usepackage[margin=3cm]{geometry}
\usepackage[usenames, dvipsnames]{color}
\usepackage[utf8]{inputenc}
\usepackage[english]{babel}
\usepackage{mathrsfs}
\usepackage{caption}
%\captionsetup[table]{name=New Table Name}
\usepackage{xcolor}

\theoremstyle{plain}
\newtheorem{theorem}{Theorem}[section]

\newtheorem{lemma}[theorem]{Lemma}
\newtheorem{corollary}[theorem]{Corollary}

\theoremstyle{definition}

\theoremstyle{remark}
\newtheorem{remark}[theorem]{Remark}

\newcommand{\icomp}{\texttt{i}}

\DeclareMathOperator{\1}{\textbf{1}}
\newcommand{\Ocal}{O}

\DeclareMathOperator{\id}{id}
\global\long\def\epsilon{\varepsilon}

\begin{document}
\date{}

\title{Sums of weighted averages of gcd-sum functions {II}}
\author{Isao Kiuchi and Sumaia Saad Eddin}

\maketitle
{\def\thefootnote{}
\footnote{{\it Mathematics Subject Classification 2010: 11A25, 11N37, 11Y60.\\ Keywords: $\gcd$-sum functions; Gamma function; Bernoulli polynomials.}}

\begin{abstract}
In this paper, we establish the following two identities involving the Gamma function and Bernoulli polynomials, namely    
$$
\sum_{k\leq x}\frac{1}{k^s}  \sum_{j=1}^{k^s}\log\Gamma\left(\frac{j}{k^s}\right)
                              \sum_{\substack{d|k \\ d^{s}|j}}f*\mu(d)
\quad 
{\rm and }
\quad   
\sum_{k\leq x}\frac{1}{k^s}\sum_{j=0}^{k^{s}-1} 
 B_{m}\sum_{\substack{d|k \\ d^{s}|j}} f*\mu(d) 
$$
with any fixed  integer $s> 1$ and  any arithmetical function $f$.  
We give asymptotic formulas for them with various multiplicative functions $f$. 
We also consider several formulas of Dirichlet series associated with the above identities. This paper is a continuation of an earlier work of the authors. 
\end{abstract}

\maketitle

\section{Introduction}
Throughout the paper we use the following notations: Let $\mathbb{N}$ be the set of positive integers and $\mathbb{N}_0=\mathbb{N} \cup\{0\}$. 
Let $f$ and $g$ be two arithmetical functions. 
The Dirichlet convolution of $f$ and $g$ is defined by  $f * g(n) = \sum_{d \mid n} f(d) g\left({n}/{d}\right)$  
for $n\in \mathbb{N}$.
The functions $\mu, \phi$ and  $\psi$, as usual, denote the M\"obius function, 
the Euler totient function and the Dedekind function.
We define arithmetic functions $\1$ and $\id$ by $\1(n) = 1$ 
and $\id (n)=n$ for all $n$. 
We recall that Bernoulli polynomials are defined by the generating function
\begin{equation*}
\frac{t e^{x t}}{e^{t}-1}=\sum_{n=0}^{\infty}B_n(x)\frac{t^n}{n!},
\end{equation*}
for $ |t|<2\pi$. For $x=0$, the numbers $B_n=B_n(0)$  are always called Bernoulli numbers. Other notations will be given in the next section.\\

For $j,k \in \mathbb{N}$, let $\gcd (j, k)$ denote their greatest common divisor. For a fixed integer $s\in \mathbb{N}$, let $(a,b)_s$ denote the greatest common s-power divisor of $a$ and $b$. If $a=j$ and $b=k^s,$ then it is 
$$(j, k^s)_s=\max \{d^s\ :\ d^s|j, d|k\}.$$
 Therefore, $(j,k^1)_1=\gcd(j,k)$.
The Ramanujan sum $c_{k}$ is an arithmetic function which is defined as
$$ 
c_{k}(j)=\sum_{\substack{m=1\\ \gcd(m,k)=1}}^{k}\exp \left( 2\pi \icomp m j/k\right)=\sum_{d|\gcd(j,k)}d\mu\left(\frac{k}{d}\right),  
$$  
where $\icomp=\sqrt{-1}$. This function was first introduced by Ramanujan in 1918. In more recent years, various generalizations of the Ramanujan sum have been constructed. 
One of the most known generalizations of $c_{k}$ is according to Cohen \cite{C1},\cite{C2},\cite{C3}, who defined the arithmetic function $c_k^{(s)}$ for $k, j \in \mathbb{N}$ and $s\in \mathbb{N}$ by
\begin{equation*}
c_k^{(s)}(j)= \sum_{\substack{m=1\\ (j,k^s)_s=1}}^{k}\exp \left( 2\pi \icomp m j/k^s\right).
\end{equation*}
The author proved that
\begin{align*}
c_{k}^{(s)}(j)& := \sum_{d^s |(j,k^{s})_{s}}d^{s}\mu\left(\frac{k}{d}\right).  
\end{align*} 
In 1952, Anderson and  Apostol \cite{AA} 
introduced more general arithmetic function $s_{k}$ defined by the identity 
\begin{equation}
\label{eq2}
s_{k}(j)=\sum_{d|\gcd(k,j)}f(d)g\left(\frac{k}{d}\right),
\end{equation}
with any arithmetical functions $f$ and $g$. 
In case that $f=\id$ and $g=\mu$, the formula~\eqref{eq2} becomes the Ramanujan sum $c_k$.
In the same paper, the authors gave several analytic and algebraic properties of $s_{k}$.
This latter is generalized to
\begin{equation}                                                                 \label{ASDFG}
s_{k}^{(s)}(j) := \sum_{d^s |(j,k^{s})_{s}}f(d)g\left(\frac{k}{d}\right).   
\end{equation}
Again, when $f=\id$ and $g=\mu$, the above formula gives the Cohen-Ramanujan sum $c_k^{(s)}(j)$. 
There has been a good amount of work to study both functions $s_{k}$ and $s_k^{(s)}$. Recently, the first author derived identities for the partial sum of weighted averages of $s_{k}(j)$ and $s_k^{(s)}(j)$ with weights being logarithms, Gamma function $\Gamma$, Bernoulli polynomials and others in~\cite{K} and \cite{K2}. For any  real number $x >1$ and any fixed  integers $r, s, m \geq 1$, he proved that:
\begin{multline}     
\label{kiuchi-gamma1}
\sum_{k\leq x}\frac{1}{k^{s(r+1)}} \sum_{j=1}^{k^s}j^r s_{k}^{(s)}(j)
=\frac{1}{2} \sum_{d\ell\leq x}\frac{f(d)}{d^s}\frac{g(\ell)}{\ell^s} 
 +\frac{1}{r+1}\sum_{d\ell\leq x}\frac{f(d)}{d^s}g(\ell)
          \\ +\frac{1}{r+1} \sum_{m=1}^{[r/2]}\binom{r+1}{2m}B_{2m}\sum_{d\ell\leq x}\frac{f(d)}{d^s}\frac{g(\ell)}{\ell^{2ms}},
\end{multline}
\begin{multline}     
\label{kiuchi-gamma}
\sum_{k\leq x}\frac{1}{k^s}  \sum_{j=1}^{k^s}s_{k}^{(s)}(j)\log\Gamma\left(\frac{j}{k^s}\right)
=\log\sqrt{2\pi} \sum_{d\ell\leq x}\frac{f(d)}{d^s}g(\ell)  \\
 - \log\sqrt{2\pi}\sum_{d\ell\leq x}\frac{f(d)}{d^s}\frac{g(\ell)}{\ell^s} 
          - \frac{s}{2} \sum_{d\ell\leq x}\frac{f(d)}{d^s}\frac{g(\ell)}{\ell^s}\log \ell,
\end{multline}
and
\begin{equation}                                                                
\label{Ber-BBB}  
\sum_{k\leq x}\frac{1}{k^s}\sum_{j=0}^{k^{s}-1} B_{m}\left(\frac{j}{k^s}\right)s_{k}^{(s)}(j)
=B_{m}\sum_{d \ell\leq x}\frac{f(d)}{d^s}\frac{g(\ell)}{\ell^{ms}},
\end{equation}
\\

For any real or complex number $a$, arithmetic functions $\phi_{a}$ and $\psi_{a}$ denote the Jordan totient function and the generalized Dedekind function
defined by $\id_{a}*\mu$ and $\id_{a}*|\mu|$, respectively where $\id_a(n)=n^a$ for $n\in \mathbb{N}_0$. The von Mangoldt function is denoted by $\Lambda$.\\
Now, if we denote sums on the left-hand side of \eqref{kiuchi-gamma1}, \eqref{kiuchi-gamma} and of \eqref{Ber-BBB} by $M_r^{(s)}(x;f,g)$, $A^{(s)}(x;f,g)$ and $H_{m}^{(s)}(x;f,g)$ respectively, take $f*\mu$ in place of $f$ and $g=\1$ and use the fact that $f*\mu*\1=f$, $f*\mu*\id_s=f*\phi_s$ and $f*\mu*\log=f*\Lambda$, we get 

\begin{multline}                                                                  \label{Kiuchi-Gamma1}
 M_r^{(s)}(x;f*\mu, \1) := \sum_{k\leq x}\frac{1}{k^{s(r+1)}}  \sum_{j=1}^{k^s}j^r
 \sum_{\substack{d|k \\ d^{s}|j}}f*\mu(d)
 \\
= \frac{1}{2}\sum_{n\leq x}\frac{f(n)}{n^s} 
+\frac{1}{r+1} \sum_{d\ell\leq x}\frac{f*\mu (d)}{d^s} 
  +\frac{1}{r+1}\sum_{m=1}^{[r/2]}\binom{r+1}{2m}B_{2m}\sum_{d\ell\leq x}\frac{f*\mu (d)}{d^s \ell^{2ms}},    
\end{multline} 

\begin{multline}                                                                   \label{Kiuchi-Gamma}
 A^{(s)}(x;f*\mu, \1) := \sum_{k\leq x}\frac{1}{k^s}  \sum_{j=1}^{k^s}\log\Gamma\left(\frac{j}{k^s}\right)
 \sum_{\substack{d|k \\ d^{s}|j}}f*\mu(d)
 \\
=\log\sqrt{2\pi}  \sum_{n\leq x}\frac{f*\phi_{s}(n)}{n^s} 
  - \log\sqrt{2\pi}\sum_{n\leq x}\frac{f(n)}{n^s} 
  - \frac{s}{2} \sum_{n\leq x}\frac{f*\Lambda(n)}{n^s},    
\end{multline}
and
\begin{equation*}                                                                             \label{Ber-GCD}
H_{m}^{(s)}(x;f*\mu, \1):=\sum_{k\leq x}\frac{1}{k^s}\sum_{j=0}^{k^{s}-1}   
                    B_{m}\sum_{\substack{d|k \\ d^{s}|j}} f*\mu(d)          
                 = B_{m}\sum_{d\ell\leq x}\frac{f*\mu(d)}{d^s}\frac{1}{\ell^{ms}}.
\end{equation*}
We will simply write $M_r^{(s)}(x;f)$, $A^{(s)}(x;f)$ and $H_{m}^{(s)}(x;f)$ instead of writing $M_r^{(s)}(x;f*\mu, \1)$, $A^{(s)}(x;f*\mu, \1)$ and $H_{m}^{(s)}(x;f*\mu, \1)$ respectively. 
When $m=1$ and  $2m$, we deduce that
\begin{align}                                                                 \label{Ber-GCD1}
&H_{1}^{(s)}(x;f) = -\frac12 \sum_{n\leq x}\frac{f(n)}{n^s}, \\                                                                  \label{Ber-GCD2m}
&H_{2m}^{(s)}(x;f) = B_{2m} \sum_{d\ell\leq x}\frac{f*\mu(d)}{d^s}\frac{1}{\ell^{2ms}},  
\end{align}
respectively. In~\cite{K2} and \cite{K3}, the authors studied the special case of $M_r^{(s)}(x;f)$, $A^{(s)}(x;f)$ and $H_{m}^{(s)}(x;f)$ when $s=1$. They gave several interesting asymptotic formulas for  weighted averages of $\gcd$-sum function $f(\gcd(k,j))$ with $f$ belonging to various multiplicative functions.  We recall that the $\gcd$-sum function, which is also known as Pillai's arithmetic function,  is essentially defined by 
$$P(n)=\sum_{k=1}^{n}\gcd(k,n).$$
Properties and various generalizations of $P(n)$ have been widely studied by many authors.
For a nice survey on this function, see \cite{To1}.  One of many generalizations of $P(n)$ is given by the identity
$$
P_{f}(n)=\sum_{k=1}^{n}f(\gcd(k,n))
$$
for an arbitrary arithmetical function $f$.\\

For the general case when $s>1$, the authors investigated, in \cite{K3}, the function $M_r^{(s)}(x;f)$ and gave asymptotic formulas of it with $f=\id_{s+a}, \phi_s, \psi_s, \phi_{s+a}, \psi_{s+a},$ $\tau*\id_s$, for any fixed real number $a$ such that $-1<a<0.$
The aim of this paper is to give asymptotic formulas for $A^{(s)}(x;f)$ and $H_1^{(s)}(x;f)$ and $H_{2m}^{(s)}(x;f)$ defined by identities \eqref{Kiuchi-Gamma}, \eqref{Ber-GCD1} and \eqref{Ber-GCD2m} with various multiplicative functions $f$.
Furthermore, we also consider several Dirichlet series associated with those functions. This work is a continuation of~\cite{K3}.

%%%%%%%%%%%%%%%%%%%%%%%%%%%%%%%%%%%%%%%%%%%%%%%%%%%%%%%%%%%%%%%%%%%%%%%%%%%%%%%%%%%%%%%%%%%%%%%%%%%%%%%%%%%%%%%%%%%%%%%%%%%%%%%%%%%%%%%%%%%%%%%

\section{Main results } 
\label{section2}
%++++++++++++++++++++++++++++++++++++++++++++++++++++++++++++++++++++++++++++++++++++++++++++++++++++++++++++++++++++++++++++++
Before proceeding further we need to fix additional notations. Let $\tau$ and $\sigma$ be the divisor function and the sum of divisors function (sigma function) defined by $\1*\1$ and $\id*\1$, respectively. Let $\theta$ be the number appearing in the Dirichlet divisor problem 
\begin{equation}                                                                               \label{Dirichlet}
\sum_{n\leq x}\tau(n)  = x\log x + (2\gamma-1)x + \Delta(x),
\end{equation}
with $\Delta(x)=O\left(x^{\theta+\varepsilon}\right)$ for $\varepsilon>0$. 
The best estimate up to date for $\Delta(x)$ due to Huxley \cite{H} is 
$$
O\left(x^{131/416}(\log x)^{26947/8320}\right).$$ The constant term $\gamma$ is the Euler constant.  \\
More general, for any real or complex number $a$, the function $\sigma_{a}$  denotes  a generalized  divisor function where $\sigma_a=\id_{a}*\1$. 
We recall that   
\begin{equation}                                                                        \label{Delta-a}
\sum_{n\leq x}\sigma_{a}(n) = 
  \zeta(1-a)x +  \frac{\zeta(1+a)}{1+a} x^{1+a}  -  \frac{\zeta(-a)}{2} +  \Delta_{a}(x)    
\end{equation}
for  $-1 < a < 0$. Here $\zeta$ denotes the Riemann zeta-function. The problem of improving  $\Delta_{a}(x)$ is known as the generalized Dirichlet divisor problem. 
The classical estimate for  $\Delta_{a}(x)$ is 
\begin{equation}                                                                               \label{Peter}
\Ocal_{a}\left(x^{\frac{1+a}{3}+\varepsilon}\right),
\end{equation}
for any small  number $\varepsilon>0$, see~\cite{Pe}.

%%%%%%%%%%%%%%%%%%%%%%%%%%%%%%%%%%%%%%%%%%%%%%%%%%%%%%%%%%%%%%%%%%%%%%%%%%%%%%%%%%%%%%%%%%%%%%%%%%%%%%%%%%%%%%%%%%%%%%%%%%%%%%%%%%%%%%%%%%%%%%%%%%%%%%%%%%%%%%%%%%%%%%%%%%%%%%%%%%%%%%%%%%%%%%%%%%%%%%%%%%%%%%%%%%%%%%%%%%%%%%%%%%%%%%%%%%%%%%%%%%%%%%%%%%%%%%%%%%%%%%%%%%%%%%%%%
\subsection{The function $A^{(s)}(x;f)$} 
Among  many  possible  applications of $A^{(s)}(x;f)$,  we study in particular the following four cases when $f=\phi_s, \phi_{s+a}, \psi_s,$ and $\psi_{s+a}$ with $s$ being any fixed integer $s\geq 2$ and $-1<a<0$. Define the functions $D_{s}(x)$ and  ${\widetilde{D_{s}}}(x)$   by       
\begin{equation}                                                               \label{lem411}
D_{s}(x) = -\sum_{d\leq x}\frac{\mu(d)}{d^s}\vartheta\left(\frac{x}{d}\right) - \frac{1}{2\zeta(s)}
\end{equation}
and
\begin{equation}                                                          \label{lem421}
{\widetilde{D_{s}}}(x) = -\sum_{d\leq x}\frac{|\mu(d)|}{d^s}\vartheta\left(\frac{x}{d}\right) - \frac{\zeta(s)}{2\zeta(2s)}, 
\end{equation}
where  
$
\vartheta(x)=x - [x] - \frac{1}{2}.$
From  \eqref{Kiuchi-Gamma}, we have the following main results.  
%%%%%%%%%%%%%%%%%%%%%%%%%%%%%%%%%%%%%%%%%%%%%%%%%%%%%%%%%%%%%%%%%%%%%%%%%%%%%%%%%%%%%%%%%%%%%%%%%%%%%%%%%%%%%%%%%%%%%%%%%%%%%%%%%%%%%%%%%%%%%%%%%%%%%%%%%%%%%%%%%%%%%%%%%%%%%
\begin{theorem}  
\label{th8}
For any  real number $x >1$,   fixed  number $a$ such that $- 1 < a < 0$, 
and  fixed  integer $s\geq 2$,    we have
\begin{multline}                                                                
\label{sum_th81} 
A^{(s)}(x;\phi_{s})  = 
   \frac{\log\sqrt{2\pi}}{\zeta(s+1)^2}x\log x 
+ \frac{s\zeta'(s+1)}{2\zeta(s+1)^2} x                                    \\
+ \frac{\log\sqrt{2\pi}}{\zeta(s+1)^2}
\left(2\gamma -1 - 2\frac{\zeta'(s+1)}{\zeta(s+1)}  - \zeta(s+1) \right) x  + Y_1^{(s)}(x),          
\end{multline}
\begin{multline}  
\label{sum_th91} 
A^{(s)}(x;\phi_{s+a})  = 
   \frac{\log\sqrt{2\pi}\zeta(1-a)}{\zeta(s+1)^2} x  
+ \frac{s\zeta'(s+a+1)}{2(1+a)\zeta(s+a+1)^2}x^{1+a}  \\
 +  \frac{\log\sqrt{2\pi}}{1+a}\left(\frac{\zeta(1+a)}{\zeta(s+a+1)^2} 
- \frac{1}{\zeta(s+a+1)}\right) x^{1+a} + Y_2^{(s)}(x),                      
\end{multline}
where 
\begin{multline}
\label{sum_th82}
Y_1^{(s)}(x) 
= \log\sqrt{2\pi} \sum_{d\leq x}\frac{\mu*\mu(d)}{d^s}\Delta\left(\frac{x}{d}\right) 
 + \frac{s}{2} \sum_{d\leq x}\frac{\mu*\Lambda(d)}{d^s} \vartheta\left(\frac{x}{d}\right)                 \\
- \frac{s\zeta'(s)}{4\zeta(s)^{2}} - \log\sqrt{2\pi}~D_{s}(x)  +  \Ocal_{s}\left(x^{1-s}(\log x)^{2}\right),        
\end{multline}
and 
\begin{multline}
\label{sum_th92}
Y_2^{(s)}(x) 
= \log\sqrt{2\pi} \sum_{d\leq x}\frac{\mu*\mu(d)}{d^s}\Delta_{a}\left(\frac{x}{d}\right) 
 \\- \frac{\zeta(-a)}{2\zeta(s)^2}\log\sqrt{2\pi}                                           
- \frac{\zeta(-a)}{\zeta(s)}\left(\log\sqrt{2\pi}-\frac{s\zeta'(s)}{2\zeta(s)}\right) 
 +  \Ocal_{s,a}\left(x^{a}\right).                                
\end{multline}
\end{theorem}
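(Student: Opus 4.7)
The plan is to apply identity \eqref{Kiuchi-Gamma}, which writes $A^{(s)}(x;f)$ as a linear combination of the three partial Dirichlet sums $S_1(f)=\sum_{n\le x}(f*\phi_s)(n)/n^s$, $S_2(f)=\sum_{n\le x}f(n)/n^s$, and $S_3(f)=\sum_{n\le x}(f*\Lambda)(n)/n^s$, and to evaluate each one in the two cases $f=\phi_s$ and $f=\phi_{s+a}$. The recurring tools are the factorization $\phi_\alpha=\id_\alpha*\mu$, the convolution identity $(\id_s*\id_{s+a})(n)=n^s\sigma_a(n)$ (which specializes to $n^s\tau(n)$ at $a=0$), the divisor theorems \eqref{Dirichlet} and \eqref{Delta-a}, and standard tail bounds for sums such as $\sum_{d>x}\mu(d)/d^s$, $\sum_{d>x}(\mu*\mu)(d)/d^{s+1+a}$, and $\sum_{d>x}(\mu*\Lambda)(d)/d^{s+1}$, all of which yield small negative powers of $x$ once $s\ge 2$.

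For $f=\phi_s$, the decomposition $\phi_s*\phi_s=(\id_s*\id_s)*(\mu*\mu)$ allows one to interchange summations to get
$$
S_1(\phi_s)=\sum_{k\le x}\frac{(\mu*\mu)(k)}{k^s}\sum_{m\le x/k}\tau(m),
$$
and plugging in \eqref{Dirichlet} produces the $x\log x$ main term with coefficient $1/\zeta(s+1)^2$, a linear contribution via $\sum(\mu*\mu)(k)\log k/k^{s+1}=2\zeta'(s+1)/\zeta(s+1)^3$, and the $\Delta$-remainder that sits inside $Y_1^{(s)}$. The hyperbola identity $S_2(\phi_s)=\sum_{d\le x}\mu(d)/d^s\,[x/d]$, combined with $[y]=y-\vartheta(y)-1/2$ and the very definition \eqref{lem411} of $D_s$, contributes both the $-\zeta(s+1)x/\zeta(s+1)^2$ main piece and the $-\log\sqrt{2\pi}\,D_s(x)$ piece in $Y_1^{(s)}$. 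Finally, $\phi_s*\Lambda=\id_s*(\mu*\Lambda)$ gives $S_3(\phi_s)=\sum_{e\le x}(\mu*\Lambda)(e)/e^s\,[x/e]$; identical bookkeeping produces the linear term $s\zeta'(s+1)/(2\zeta(s+1)^2)\,x$, the $\vartheta$-sum appearing in \eqref{sum_th82}, and the constant $-s\zeta'(s)/(4\zeta(s)^2)$. Combining all three streams yields \eqref{sum_th81}.

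For $f=\phi_{s+a}$ the same pattern works with $\tau$ replaced by $\sigma_a$. The key expression is
$$
S_1(\phi_{s+a})=\sum_{k\le x}\frac{(\mu*\mu)(k)}{k^s}\sum_{m\le x/k}\sigma_a(m),
$$
and applying \eqref{Delta-a} produces the $\zeta(1-a)x/\zeta(s+1)^2$ piece, the $\zeta(1+a)x^{1+a}/((1+a)\zeta(s+a+1)^2)$ piece, the constant $-\zeta(-a)/(2\zeta(s)^2)$, and the $\Delta_a$-remainder appearing in \eqref{sum_th92}. The sums $S_2(\phi_{s+a})=\sum_{d\le x}\mu(d)/d^s\sum_{e\le x/d}e^a$ and $S_3(\phi_{s+a})=\sum_{e\le x}(\mu*\Lambda)(e)/e^s\sum_{d\le x/e}d^a$ are handled via the Euler--Maclaurin expansion $\sum_{e\le y}e^a=y^{1+a}/(1+a)+\zeta(-a)+\Ocal(y^a)$ valid for $-1<a<0$, which produces the remaining $x^{1+a}$ coefficients and, after collecting all the constants, the compact form $-(\zeta(-a)/\zeta(s))(\log\sqrt{2\pi}-s\zeta'(s)/(2\zeta(s)))$ featured in \eqref{sum_th92}.

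The main obstacle is not any single step but the bookkeeping: each of $S_1,S_2,S_3$ splits into a main term, a constant-in-$x$ piece, and a fluctuating remainder, and one has to check that after the prefactors from \eqref{Kiuchi-Gamma} are applied the three streams reassemble into exactly the displayed forms of $Y_1^{(s)}$ and $Y_2^{(s)}$ rather than some equivalent rearrangement. The hypothesis $s\ge 2$ enters at precisely one place, namely in ensuring that the tails of $\sum(\mu*\mu)(k)/k^{s+1+a}$, $\sum\mu(d)/d^s$, and $\sum_{d\le x}d^{-a-s}$ all give admissible contributions---$\Ocal_s(x^{1-s}(\log x)^2)$ in the $\phi_s$ case and $\Ocal_{s,a}(x^a)$ in the $\phi_{s+a}$ case---after being multiplied by the $x$ or $x^{1+a}$ prefactors produced by the divisor theorems.
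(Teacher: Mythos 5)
Your proposal is correct and follows essentially the same route as the paper: the same decomposition of $A^{(s)}(x;f)$ into the three sums via \eqref{Kiuchi-Gamma}, the same convolution identities $\phi_s*\phi_s=(\mu*\mu)*(\id_s\tau)$ and $\phi_{s+a}*\phi_s=(\mu*\mu)*(\id_s\sigma_a)$, and the same inputs \eqref{Dirichlet}, \eqref{Delta-a}, \eqref{lem1-apo}, and the tail estimates for $\sum(\mu*\mu)(d)d^{-s-1}$, $\sum(\mu*\mu)(d)d^{-s-1}\log d$ and $\sum(\mu*\Lambda)(d)d^{-s-1}$. The bookkeeping you describe reassembles into exactly the displayed $Y_1^{(s)}$ and $Y_2^{(s)}$, so no changes are needed.
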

%%%%%%%%%%%%%%%%%%%%%%%%%%%%%%%%%%%%%%%%%%%%%%%%%%%%%%%%%%%%%%%%%%%%%%%%%%%%%%%%%%%%%%%%%%%%%%%%%%%%%%%%%%%%%%%%%%%%%%%%%%%%%%%%%%%%%%%%%%%%%%%%%%%%%%%%%%%%%%%%%%
%%%%%%%%%%%%%%%%%%%%%%%%%%%%%%%%%%%%%%%%%%%%%%%%%%%%%%%%%%%%%%%%%%%%%%%%%%%%%%%%%%%%%%%%%%%%%%%%%%%%%%%%%%%%%%%%%%%%%%%%%%%%%%%%%%%%%%%%%%%%%%%%%%%%%%%%%%%%%%%%%%%%%%%%%%%%%%%
%%%%%%%%%%%%%%%%%%%%%%%%%%%%%%%%%%%%%%%%%%%%%%%%%%%%%%%%%%%%%%%%%%%%%%%%%%%%%%%%%%%%%%%%%%%%%%%%%%%%%%%%%%%%%%%%%%%%%%%%%%%%%%%%%%%%%%%%%%%%%%%%%%%%%%%%%%%%%%%%%%%%%%%%%%%%%%%%%%%%%%%
\begin{theorem}  
\label{th10}
Under the hypothesis of Theorem~\ref{th8}, we have
\begin{multline}  
\label{sum_th101}
A^{(s)}(x;\psi_{s})  = 
   \frac{\log\sqrt{2\pi}}{\zeta(2s+2)}x\log x 
+ \frac{s\zeta'(s+1)}{2\zeta(2s+2)} x          \\
+ \frac{\log\sqrt{2\pi}}{\zeta(2s+2)}
\left(2\gamma -1 - 2\frac{\zeta'(2s+2)}{\zeta(2s+2)} - \zeta(s+1) \right) x  + Y_3^{(s)}(x),     
\end{multline}
and 
\begin{multline}  
 \label{sum_th111} 
A^{(s)}(x;\psi_{s+a}) = 
   \frac{\log\sqrt{2\pi}\zeta(1-a)}{\zeta(2s+2)} x  
+ \frac{s\zeta'(s+a+1)}{2(1+a)\zeta(2s+2a+2)}x^{1+a}       \\
 +  \frac{\log\sqrt{2\pi}}{1+a}\left(\frac{\zeta(1+a)}{\zeta(2s+2a+2)} 
- \frac{\zeta(s+a+1)}{\zeta(2s+2a+2)}\right) x^{1+a} 
 +  Y_4^{(s)}(x),                                
\end{multline}
where 
\begin{multline}
\label{sum_th102}
Y_3^{(s)}(x) 
 = \log\sqrt{2\pi} \sum_{d\leq x}\frac{\mu*|\mu|(d)}{d^s}\Delta\left(\frac{x}{d}\right) - \frac{s\zeta'(s)}{4\zeta(s)^{2}}     \\        
   + \frac{s}{2} \sum_{d\leq x}\frac{\mu*|\mu|(d)}{d^s} \vartheta\left(\frac{x}{d}\right) 
- \log\sqrt{2\pi}~{\widetilde{D_{s}}}(x) + \Ocal_{s}\left(x^{1-s}(\log x)^{2}\right),    
\end{multline}
and 
\begin{multline}
\label{sum_th112}
Y_4^{(s)}(x) 
= \log\sqrt{2\pi} \sum_{d\leq x}\frac{\mu*|\mu|(d)}{d^s}\Delta_{a}\left(\frac{x}{d}\right)
 - \frac{\zeta(-a)}{2\zeta(2s)}\log\sqrt{2\pi}   \\
- \frac{\zeta(-a)}{\zeta(s)}\left(\log\sqrt{2\pi}-\frac{s\zeta'(s)}{2\zeta(s)}\right) 
 +  \Ocal_{s,a}\left(x^{a}\right).                  
\end{multline}
\end{theorem}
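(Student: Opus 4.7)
The plan is to substitute $f=\psi_s$ and $f=\psi_{s+a}$ into the master identity~\eqref{Kiuchi-Gamma}, which expresses $A^{(s)}(x;f)$ as a linear combination of the three partial sums $\sum_{n\leq x}(f*\phi_s)(n)/n^s$, $\sum_{n\leq x}f(n)/n^s$, and $\sum_{n\leq x}(f*\Lambda)(n)/n^s$. Since $\psi_a=\id_a*|\mu|$ and $\phi_s=\id_s*\mu$, each of these partial sums factors as a Dirichlet convolution in which one factor is built from $\id$-functions (which absorb the weight $1/n^s$) and the other involves $|\mu|$, $\mu$, or $\Lambda$. The whole computation is then parallel to the proof of Theorem~\ref{th8}: wherever that argument used $\sum_n\mu(n)/n^z=1/\zeta(z)$, we now use $\sum_n|\mu|(n)/n^z=\zeta(z)/\zeta(2z)$, and it is precisely this single replacement that converts the $\zeta(s+1)^2$ and $\zeta(s+a+1)^2$ denominators of Theorem~\ref{th8} into the $\zeta(2s+2)$ and $\zeta(2s+2a+2)$ denominators appearing in~\eqref{sum_th101} and~\eqref{sum_th111}.

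For the first partial sum one writes $\psi_s*\phi_s=(\id_s*\id_s)*(|\mu|*\mu)$ and uses $(\id_s*\id_s)(v)/v^s=\tau(v)$ together with~\eqref{Dirichlet}; for the $s+a$ variant, $\psi_{s+a}*\phi_s=(\id_{s+a}*\id_s)*(|\mu|*\mu)$ with $(\id_{s+a}*\id_s)(v)/v^s=\sigma_a(v)$, and one invokes~\eqref{Delta-a}. The resulting outer Dirichlet sums in $u$ are extended to infinity via $\sum_u(|\mu|*\mu)(u)/u^z=1/\zeta(2z)$ and its logarithmic derivative, producing the $x\log x$, $x$, and $x^{1+a}$ main terms; the tails give errors of size $\Ocal_s(x^{1-s}(\log x)^{2})$ and $\Ocal_{s,a}(x^{a})$ respectively, while the residual sums $\sum_d(\mu*|\mu|)(d)\Delta(x/d)/d^s$ and its $\Delta_a$ analogue are left unevaluated inside $Y_3^{(s)}$ and $Y_4^{(s)}$. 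For the second partial sum one writes $\sum_{n\leq x}\psi_{s+a}(n)/n^s=\sum_{d\leq x}\frac{|\mu|(d)}{d^s}\sum_{e\leq x/d}e^{a}$ and applies $\sum_{e\leq y}e^{a}=y^{1+a}/(1+a)+\zeta(-a)+\Ocal(y^{a})$; in the borderline case $a=0$ one substitutes $[x/d]=x/d-\vartheta(x/d)-\tfrac{1}{2}$, and the $\vartheta$-sum together with the constant tail assemble exactly into the quantity $\widetilde{D_{s}}(x)$ of~\eqref{lem421}.

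For the third partial sum one exploits $\psi_s*\Lambda=\id_s*(|\mu|*\Lambda)$ together with the Dirichlet series identity $\sum_{n}(|\mu|*\Lambda)(n)/n^{z}=-\zeta'(z)/\zeta(2z)$; evaluation at $z=s+1$ (respectively $z=s+1+a$) produces the $\zeta'(s+1)$ and $\zeta'(s+a+1)$ factors visible in~\eqref{sum_th101} and~\eqref{sum_th111}, while the secondary constant $-\tfrac{1}{2}$ coming from $[x/e]$ contributes the isolated $\zeta'(s)$ pieces that appear inside $Y_3^{(s)}$ and $Y_4^{(s)}$. Adding the three contributions, grouping the $x\log x$, $x$, and $x^{1+a}$ terms and sorting the rest into $Y_3^{(s)}(x)$ and $Y_4^{(s)}(x)$, then delivers the stated asymptotics.

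The main obstacle is bookkeeping: each of the three partial sums generates several lower-order contributions (constants coming from $\zeta$- and $\zeta'$-values at $s$, $s+1$, $s+a+1$, $2s$, $2s+2$, \ldots, residual $\vartheta$- and $\Delta$-sums, and tail errors from the extension of finite sums to infinite Dirichlet series), and one must verify that after assembly these combine precisely into the compact forms~\eqref{sum_th102} and~\eqref{sum_th112}. No genuinely new analytic input is required beyond the Dirichlet series identities $\sum|\mu|(n)/n^{z}=\zeta(z)/\zeta(2z)$, $\sum(|\mu|*\mu)(n)/n^{z}=1/\zeta(2z)$, and $\sum(|\mu|*\Lambda)(n)/n^{z}=-\zeta'(z)/\zeta(2z)$; the first comes from the Euler product for squarefree integers, and the others follow by multiplying Dirichlet series. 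A useful observation for the tail estimates is that $(|\mu|*\mu)(n)$ is supported on perfect squares (it equals $\mu(\sqrt{n})$ there), so the residual sums carried into $Y_3^{(s)}$ and $Y_4^{(s)}$ are in fact quite short and cause no trouble for the stated error terms.
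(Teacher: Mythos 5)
Your proposal is correct and follows essentially the same route as the paper: substitute $f=\psi_s$ and $f=\psi_{s+a}$ into \eqref{Kiuchi-Gamma}, reduce the first and third sums via $\frac{\psi_{s}*\phi_{s}}{\id_s}=\frac{\mu*|\mu|}{\id_{s}}*\tau$ (resp.\ $*\,\sigma_a$) and $\frac{\psi_{s}*\Lambda}{\id_{s}}=\frac{|\mu|*\Lambda}{\id_{s}}*\1$, handle the middle sum by Lemmas~\ref{lem4} and~\ref{lem5}, and replace the $1/\zeta(\cdot)^2$ constants of Theorem~\ref{th8} by $1/\zeta(2\cdot)$ via $\sum_n(\mu*|\mu|)(n)n^{-z}=1/\zeta(2z)$. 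Your extra remark that $\mu*|\mu|$ is supported on squares is a harmless bonus not used in the paper.
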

%%%%%%%%%%%%%%%%%%%%%%%%%%%%%%%%%%%%%%%%%%%%%%%%%%%%%%%%%%%%%%%%%%%%%%%%%%%%%%%%%%%%%%%%%%%%%%%%%%%%%%%%%%%%%%%%%%%%%%%%%%%%%%%%%%%%%%%%%%%%%%%%%%%%%%%%%%%%%%%%%%%%%%%%%%%%%%%%%%%%%%
\begin{remark}
Note that, by using elementary results of $\Delta$ and $\Delta_a$, it is easy to check that our functions $Y_1(x)$ and $Y_3(x)$ are estimated by $\Ocal_s\left( x^{\frac{1}{2}+\epsilon} \right)$, and $Y_2(x)$ and $Y_4(x)$ are estimated by $\Ocal_{s,a}\left(x^{\frac{1+a}{3}+\epsilon}\right).$ Hence we get 
$$
\lim_{x\to \infty}\frac{ A^{(s)}(x;\phi_{s})}{x\log x} =\frac{\log \sqrt{2\pi}}{\zeta(s+1)^2}, 
\quad 
\quad  
\lim_{x\to \infty}\frac{ A^{(s)}(x;\psi_{s})}{x\log x} =\frac{\log \sqrt{2\pi}}{\zeta(2s+2)},   
$$
and 
$$
\lim_{x\to \infty}\frac{ A^{(s)}(x;\phi_{s+a})}{x} =\frac{\log \sqrt{2\pi}\zeta(1-a)}{\zeta(s+1)^2}, 
\quad 
\quad  
\lim_{x\to \infty}\frac{ A^{(s)}(x;\psi_{s+a})}{x} =\frac{\log \sqrt{2\pi}\zeta(1-a)}{\zeta(2s+2)}.   
$$
From the above, we conclude that it is difficult to improve the $\Ocal$-terms in our theorems, since they basically depend on estimations of $\Delta$ and $\Delta_a$.  
\end{remark}
%%%%%%%%%%%%%%%%%%%%%%%%%%%%%%%%%%%%%%%%%%%%%%%%%%%%%%%%%%%%%%%%%%%%%%%%%%%%%%%%%%%%%%%%%%%%%%%%%%%%%%%%%%%%%%%%%%%%%%%%%%%%%%%%%%%%%%%%%%%%%%%%%%%%%%%%%%%%%%%%%%%%%%%%%%%%%%%%%%%
%%%%%%%%%%%%%%%%%%%%%%%%%%%%%%%%%%%%%%%%%%%%%%%%%%%%%%%%%%%%%%%%%%%%%%%%%%%%%%%%%%%%%%%%%%%%%%%%%%%%%%%%%%%%%%%%%%%%%%%%%%%%%%%%%%%%%%%%%%%%%%%%%%%%%%%%%%%%%%%%%%%%%%%%%%%%%%%%%%%%%%%%%%%%%%%%%%%%%%%%%%%%%%%%%%%%%%%%%%%%%%%%%%%%%%%%%%%%%%%%%%%%%%%%%%%%%%%

\subsection{Dirichlet series associated with $A^{(s)}(x; f).$} 

%++++++++++++++++++++++++++++++++++++++++++++++++++++++++++++++++++++++++++++++++++++++++++++++++++++++++++++++++++++++++++++++++++++

Let $F(w)$ and $G(w)$ be two functions represented by Dirichlet series as follows   
\begin{align*}
&F(w)=\sum_{k=1}^{\infty}\frac{f(k)}{k^w},   \qquad  {\rm Re} (w) > \sigma_{1}, 
\\
&G(w)=\sum_{k=1}^{\infty}\frac{g(k)}{k^w},  \qquad  {\rm Re} (w) > \sigma_{2},
\end{align*}
which are absolutely convergent in the half-plane ${\rm Re} (w) > \sigma_{1}$ and ${\rm Re} (w) > \sigma_{2}$,  respectively. 
Then Dirichlet series of the first derivative $G'(w)$ of $G(w)$ is given by 
$$
G'(w) = -\sum_{k=1}^{\infty}\frac{g(k)}{k^w}\log k, \qquad  {\rm Re} (w) > \sigma_{2}. 
$$ 

For $s, k \in \mathbb{N}_0$, define the weighted average $\kappa^{(s)}(k; f, g) $ by 
\begin{equation*}
\kappa^{(s)}(k; f, g)= \frac{1}{k^s}  \sum_{j=1}^{k^s}s_{k}^{(s)}(j)\log\Gamma\left(\frac{j}{k^s}\right)
\end{equation*}
In ~\cite{K2}, the first author showed that  
\begin{equation}                                                                \label{k-g}
\kappa^{(s)}(k; f, g)
=\log\sqrt{2\pi} \frac{f}{\id_{s}}*g (k)  
- \log\sqrt{2\pi}\frac{f*g(k)}{k^s}  - \frac{s}{2k^s} (f*g \log)(k).       
\end{equation}
Let $K^{(s)}(w; f, g)$ be a function represented by Dirichlet series as follows: 
$$
K^{(s)}(w; f, g)   := \sum_{k=1}^{\infty}\frac{\kappa^{(s)}(k; f, g) }{k^w},  
$$
where this summation is absolutely convergent for ${\rm Re} (w) >\alpha$. 
We use a property of Dirichlet series to obtain    
\begin{eqnarray*}                                                                      
K^{(s)}(w; f, g)  
&=& \log\sqrt{2\pi} \sum_{k=1}^{\infty}\sum_{\ell n=k}\frac{f(\ell)}{\ell^s}g(n)\frac{1}{k^w}   \\
& & - \log\sqrt{2\pi} \sum_{k=1}^{\infty}\sum_{\ell n=k} f(\ell) g(n)  \frac{1}{k^{w+s}} 
          - \frac{s}{2} \sum_{k=1}^{\infty}\sum_{\ell n=k} f(\ell) g(n)\log n  \frac{1}{k^{w+s}}   \\
&=& \log\sqrt{2\pi} \sum_{\ell=1}^{\infty} \frac{f(\ell)}{\ell^{w+s}}\sum_{n=1}^{\infty} \frac{g(n)}{n^w}   \\
&&- \log\sqrt{2\pi} \sum_{\ell=1}^{\infty} \frac{f(\ell)}{\ell^{w+s}}\sum_{n=1}^{\infty}\frac{g(n)}{n^{w+s}} 
 - \frac{s}{2}     \sum_{\ell=1}^{\infty} \frac{f(\ell)}{\ell^{w+s}}\sum_{n=1}^{\infty}\frac{g(n)\log n}{n^{w+s}}.                 
\end{eqnarray*}
This leads to the following result 
\begin{equation*}                                                            
  K^{(s)}(w; f, g)   
= \log\sqrt{2\pi} F(w+s) \left(G(w) -  G(w+s)\right) 
 +  \frac{s}{2}     F(w+s) G'(w+s). 
\end{equation*}
Taking $f*\mu$ in place of $f$ and $g=\1$, the above identity becomes 
\begin{equation*}                                                               
 K^{(s)}(w; f*\mu, \1)  
= \log\sqrt{2\pi} F(w+s)\left(\frac{\zeta(w)}{\zeta(w+s)} - 1\right)  
 +  \frac{s}{2} F(w+s) \frac{\zeta'(w+s)}{\zeta(w+s)}. 
\end{equation*}
Again, we write $K^{(s)}(w;f) $ instead of writing $K^{(s)}(w; f*\mu, \1).$ 
For $f=\phi_s, \psi_s, \phi_{s+a}$ and $ \psi_{s+a}$ successively, we deduce the following identities: 
%%%%%%%%%%%%%%%%%%%%%%%%%%%%%%%%%%%%%%%%%%%%%%%%%%%%%%%%%%%%%%%%%%%%%%%%%%%%%%%%%%%%%%%%%%%%%%%%%%%%%%%%%%%%%%%%%%%%%%%%%%%%%%%%%%%%%%%%%%%%%%%%%%%%%%%%%%%%%%%%%%%%%%%%%%%%%%%%%%%%%%%
\begin{corollary}                                                                
\label{cor1}
Under the above notations, we have 
\begin{align*}                                                               
&K^{(s)}(w; \phi_{s}*\mu)  
 =\log\sqrt{2\pi} \frac{\zeta(w)}{\zeta(w+s)} \left(\frac{\zeta(w)}{\zeta(w+s)} - 1\right)   
 +  \frac{s}{2} \frac{\zeta(w)\zeta'(w+s)}{\zeta(w+s)^2},  
\\                                             
 &K^{(s)}(w; \psi_{s}*\mu)  
 =\log\sqrt{2\pi} \frac{\zeta(w)\zeta(w+s)}{\zeta(2w+2s)} \left(\frac{\zeta(w)}{\zeta(w+s)} - 1\right)  
 +  \frac{s}{2} \frac{\zeta(w)\zeta'(w+s)}{\zeta(2w+2s)},          
\\
& K^{(s)}(w; \phi_{s+a}*\mu)  
 =\log\sqrt{2\pi} \frac{\zeta(w-a)}{\zeta(w+s)} \left(\frac{\zeta(w)}{\zeta(w+s)} - 1\right) 
 +  \frac{s}{2} \frac{\zeta(w-a)\zeta'(w+s)}{\zeta(w+s)^2}                   
\\
 & K^{(s)}(w; \psi_{s+a}*\mu)  
 =\log\sqrt{2\pi} \frac{\zeta(w-a)\zeta(w+s)}{\zeta(2w+2s)} \left(\frac{\zeta(w)}{\zeta(w+s)} - 1\right)  
   +  \frac{s}{2} \frac{\zeta(w-a)\zeta'(w+s)}{\zeta(2w+2s)}.    
\end{align*}
\end{corollary}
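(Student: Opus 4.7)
The plan is to specialize the boxed formula
$$
K^{(s)}(w; f*\mu, \1) = \log\sqrt{2\pi}\, F(w+s)\left(\frac{\zeta(w)}{\zeta(w+s)} - 1\right) + \frac{s}{2}\, F(w+s)\, \frac{\zeta'(w+s)}{\zeta(w+s)}
$$
already derived immediately before the corollary, plugging in $f=\phi_s,\psi_s,\phi_{s+a},\psi_{s+a}$ successively. The whole proof reduces to computing $F(w+s)$ in each of the four cases and substituting.

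First I would record the standard Dirichlet series
$$
\sum_{n=1}^{\infty}\frac{\mu(n)}{n^w}=\frac{1}{\zeta(w)},\qquad \sum_{n=1}^{\infty}\frac{|\mu|(n)}{n^w}=\frac{\zeta(w)}{\zeta(2w)},\qquad \sum_{n=1}^{\infty}\frac{\id_b(n)}{n^w}=\zeta(w-b),
$$
valid in the appropriate half-planes. Combined with the convolution identities $\phi_s=\id_s*\mu$, $\psi_s=\id_s*|\mu|$, $\phi_{s+a}=\id_{s+a}*\mu$ and $\psi_{s+a}=\id_{s+a}*|\mu|$, and the fact that Dirichlet convolution corresponds to multiplication of Dirichlet series, these immediately yield
$$
\sum_{n=1}^{\infty}\frac{\phi_s(n)}{n^w}=\frac{\zeta(w-s)}{\zeta(w)},\qquad \sum_{n=1}^{\infty}\frac{\psi_s(n)}{n^w}=\frac{\zeta(w-s)\zeta(w)}{\zeta(2w)},
$$
and the analogous pair with $s$ replaced by $s+a$.

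Next I would perform the shift $w\mapsto w+s$ in each Dirichlet series, giving $F(w+s)=\zeta(w)/\zeta(w+s)$, $\zeta(w)\zeta(w+s)/\zeta(2w+2s)$, $\zeta(w-a)/\zeta(w+s)$ and $\zeta(w-a)\zeta(w+s)/\zeta(2w+2s)$ in the four cases. Substituting these four expressions for $F(w+s)$ in the boxed identity and simplifying, the prefactor $F(w+s)$ multiplies both the bracketed $\bigl(\zeta(w)/\zeta(w+s)-1\bigr)$ term and the $\zeta'(w+s)/\zeta(w+s)$ term in the prescribed manner, reproducing verbatim the four identities stated in Corollary \ref{cor1}.

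There is no genuine obstacle here: the argument is mechanical once the Dirichlet series of the four multiplicative functions are in hand. The only point requiring a bit of care is the region of absolute convergence, which is inherited from that of the Dirichlet series of $f$ and the term-by-term rearrangement used in deriving the boxed formula; a routine comparison with $\zeta$ shows convergence in a half-plane $\mathrm{Re}(w)$ sufficiently large (roughly $\mathrm{Re}(w)>s+1$ in the first and second cases, and $\mathrm{Re}(w)>s+a+1$ in the remaining two), which is all that is needed for the identities to be meaningful.
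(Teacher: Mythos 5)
Your proposal is correct and follows exactly the route the paper takes: substitute the Dirichlet series $F(w+s)$ of $\phi_s$, $\psi_s$, $\phi_{s+a}$, $\psi_{s+a}$ (obtained from $\phi_b=\id_b*\mu$, $\psi_b=\id_b*|\mu|$ and the standard series for $\mu$, $|\mu|$, $\id_b$) into the identity $K^{(s)}(w;f*\mu,\1)=\log\sqrt{2\pi}\,F(w+s)\left(\zeta(w)/\zeta(w+s)-1\right)+\frac{s}{2}F(w+s)\zeta'(w+s)/\zeta(w+s)$ derived just before the corollary. The four computed values of $F(w+s)$ and the resulting identities all check out, and your remark on the half-planes of absolute convergence is a harmless addition.
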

%%%%%%%%%%%%%%%%%%%%%%%%%%%%%%%%%%%%%%%%%%%%%%%%%%%%%%%%%%%%%%%%%%%%%%%%%%%%%%%%%%%%%%%%%%%%%%%%%%%%%%%%%%%%%%%%%%%%%%%%%%%%%%%%%%%%%%%%%%%%%%%%%%%%%%%%%%%%%%%%%%%%%%%%%%%%%%%%%%%%%%%%%%%%%%%%%%%%%%%%%%%%%%%%%%%%%%%%%%%%%%%%%%%%%%%%%%%%%%%%%%%%%%%%%%%%%%%%%%%%%%%%%%%%%%%%%%%
\subsection{ The function $H_{m}^{(s)}(x;f)$} 
Now, we consider the partial sums of the weighted average of $s_k^{(s)}(j)$ involving Bernoulli polynomials. We establish asymptotic formulas of \eqref{Ber-GCD1} and \eqref{Ber-GCD2m} 
for $f=\phi_s,$ $\phi_{s+a},$ $\psi_s,\psi_{s+a}$,  with $-1<a<0$. 
Our results are precisely the following: 
%%%%%%%%%%%%%%%%%%%%%%%%%%%%%%%%%%%%%%%%%%%%%%%%%%%%%%%%%%%%%%%%%%%%%%%%%%%%%%%%%%%%%%%%%%%%%%%%%%%%%%%%%%%%%%%%%%%%%%%%%%%%%%%%%%%%%%%%%%%%%%%%%%%%%%%%%%%%%%%%%%%%%%%%%%%
\begin{theorem}  \label{th12}
For any  real number $x >1$,   fixed  integers $s \geq 2$, $m\geq 1$,   we have
\begin{align} 
\label{sum_th121}
&H_{1}^{(s)}(x;\phi_{s}) = 
-\frac{1}{2\zeta(s+1)}x - \frac{D_{s}(x)}{2} + \Ocal_{s}\left(x^{1-s}\right),    \\
&
\label{sum_th122}  
H_{1}^{(s)}(x;\psi_{s}) = 
-\frac{\zeta(s+1)}{2\zeta(2s+2)}x - \frac{{\widetilde{D_{s}}}(x)}{2} + \Ocal_{s}\left(x^{1-s}\right),   
\end{align}
where $D_{s}(x)$ and ${\widetilde{D_{s}}}(x)$ are given by 
\eqref{lem411} and \eqref{lem421}.  
Moreover, we have 
\begin{align} 
 \label{sum_th123}                       
&H_{2m}^{(s)}(x;\phi_{s}) = B_{2m} \frac{\zeta(2ms+1)}{\zeta(s+1)^2}x  
- B_{2m} \frac{\zeta(2ms)}{2\zeta(s)^2}  + \Ocal_{m,s}\left(x^{1-s}\log x\right), \\
&
\label{sum_th124}
H_{2m}^{(s)}(x;\psi_{s}) = B_{2m} 
\frac{\zeta(2ms+1)}{\zeta(2s+2)}x  - B_{2m} \frac{\zeta(2ms)}{2\zeta(2s)}  + \Ocal_{m,s}\left(x^{1-s}\log x\right).        
\end{align}
\end{theorem}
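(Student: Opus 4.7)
The plan is to derive \eqref{sum_th121}--\eqref{sum_th124} directly from the closed-form identities \eqref{Ber-GCD1} and \eqref{Ber-GCD2m}, reducing each right-hand side to an elementary partial sum, completing the resulting Dirichlet series, and estimating the tails.

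For \eqref{sum_th121}--\eqref{sum_th122} I would start from $H_1^{(s)}(x;f)=-\tfrac12\sum_{n\le x}f(n)/n^s$ and use
$$\frac{\phi_s(n)}{n^s}=\sum_{d\mid n}\frac{\mu(d)}{d^s},\qquad \frac{\psi_s(n)}{n^s}=\sum_{d\mid n}\frac{|\mu(d)|}{d^s}.$$
Interchanging summation produces $\sum_{d\le x}\mu(d)d^{-s}\lfloor x/d\rfloor$ and its $|\mu|$-analogue. Writing $\lfloor x/d\rfloor=x/d-\vartheta(x/d)-\tfrac12$ and completing the three Dirichlet-series tails (using $\sum_d\mu(d)/d^{s+1}=1/\zeta(s+1)$, $\sum_d\mu(d)/d^s=1/\zeta(s)$ and $\sum_d|\mu(d)|/d^{w}=\zeta(w)/\zeta(2w)$, with tails of orders $x^{-s}$ and $x^{1-s}$), one recognises the $\vartheta$-contribution together with its companion constant as exactly $D_s(x)$, respectively $\widetilde{D_s}(x)$, from \eqref{lem411}--\eqref{lem421}. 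Multiplying by $-1/2$ gives \eqref{sum_th121} and \eqref{sum_th122}.

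For \eqref{sum_th123}--\eqref{sum_th124} the key input is the identity
$$\frac{(\phi_s*\mu)(d)}{d^s}=\sum_{k\mid d}\frac{(\mu*\mu)(k)}{k^s},$$
obtained from $(\phi_s*\mu)(d)=\sum_{abc=d}a^s\mu(b)\mu(c)$ by dividing by $d^s$ and setting $k=bc$. Substituting this in \eqref{Ber-GCD2m} makes the double sum $S:=\sum_{d\ell\le x}(\phi_s*\mu)(d)/(d^s\ell^{2ms})$ become the triple sum $\sum_{k\ell m\le x}(\mu*\mu)(k)/(k^s\ell^{2ms})$; summing over $m$ produces the weight $\lfloor x/(k\ell)\rfloor$, which I split as $x/(k\ell)-\tfrac12-\vartheta(x/(k\ell))$. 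The first two pieces extend to the full Dirichlet series, with tails controlled by $\sum_{n>x}d_3(n)/n^{s+1}\ll x^{-s}(\log x)^2$, producing the main contribution
$$\frac{x\,\zeta(2ms+1)}{\zeta(s+1)^2}-\frac{\zeta(2ms)}{2\zeta(s)^2}+\Ocal\bigl(x^{1-s}(\log x)^2\bigr).$$
The third piece, $-\sum_{k\ell\le x}(\mu*\mu)(k)k^{-s}\ell^{-2ms}\vartheta(x/(k\ell))$, I handle by summing $\ell$ first and invoking the power-saving bound
$$\sum_{\ell\le y}\vartheta(y/\ell)/\ell^{2ms}=\Ocal(y^{1-2ms});$$
the outer $k$-sum is then bounded by $x^{1-2ms}\sum_{k\le x}d(k)\,k^{2ms-s-1}\ll x^{1-s}\log x$, since $2ms-s-1\ge 1$ whenever $m\ge 1$ and $s\ge 2$. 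The formula \eqref{sum_th124} follows from exactly the same scheme with $(\mu*\mu)$ replaced by $(\mu*|\mu|)$, whose Dirichlet series is $1/\zeta(2w)$; this swaps $\zeta(s+1)^2\mapsto\zeta(2s+2)$ and $\zeta(s)^2\mapsto\zeta(2s)$ in the two main constants.

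The main obstacle is the power-saving estimate $\sum_{\ell\le y}\vartheta(y/\ell)/\ell^{2ms}=\Ocal(y^{1-2ms})$: the trivial bound $|\vartheta|\le\tfrac12$ only yields $\Ocal(1)$, which would leave an inadmissible $\Ocal(1)$ remainder in the final answer. The required cancellation comes from the classical two-term asymptotic
$$\sum_{n\le y}\sigma_{-2ms}(n)=y\,\zeta(2ms+1)-\frac{\zeta(2ms)}{2}+\Ocal(y^{1-2ms}),$$
which can be proved either by Abel summation from $\sigma_{-2ms}(n)=n^{-2ms}\sigma_{2ms}(n)$ together with the standard asymptotic for $\sum_{n\le y}\sigma_{2ms}(n)$, or by Perron's formula applied to $\zeta(w)\zeta(w+2ms)$, noting the absence of further singularities in the strip $1-2ms<\operatorname{Re} w<1$. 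Once this bound is in hand, the three-piece decomposition above assembles into the stated asymptotics.
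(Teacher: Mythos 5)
Your treatment of \eqref{sum_th121} and \eqref{sum_th122} is exactly the paper's: $H_1^{(s)}(x;f)=-\tfrac12\sum_{n\le x}f(n)n^{-s}$ combined with the elementary expansions of $\phi_s/\id_s$ and $\psi_s/\id_s$, keeping the $\vartheta$-contribution explicit inside $D_s(x)$ and $\widetilde{D_{s}}(x)$; this part is fine. For \eqref{sum_th123} and \eqref{sum_th124} your route is also essentially the paper's: the paper writes the double sum as $\sum_{d\le x}\frac{(\mu*\mu)(d)}{d^s}\sum_{\ell\le x/d}\sigma_{-2ms}(\ell)$ and invokes the two-term asymptotic \eqref{lem6-AA}, and your triple-sum decomposition is the same computation with the pieces separated.

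However, the estimate you single out as the main obstacle,
\[
\sum_{\ell\le y}\frac{\vartheta(y/\ell)}{\ell^{2ms}}=\Ocal\left(y^{1-2ms}\right),
\]
is false, and neither Abel summation nor Perron's formula can prove it. The term $\ell=1$ contributes $\vartheta(y)=\{y\}-\tfrac12$, which equals $\tfrac14$ for every $y$ of the form $N+\tfrac34$, while the terms $\ell\ge2$ contribute at most $\tfrac12\left(\zeta(2ms)-1\right)\le\tfrac12\left(\zeta(4)-1\right)<0.05$ in absolute value; hence the left-hand side exceeds $0.2$ along a sequence $y\to\infty$, whereas the right-hand side tends to $0$. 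Equivalently, the ``classical two-term asymptotic'' $\sum_{n\le y}\sigma_{-2ms}(n)=\zeta(2ms+1)y-\tfrac12\zeta(2ms)+\Ocal\left(y^{1-2ms}\right)$ cannot hold: the left side jumps by $\sigma_{-2ms}(n)\ge1$ at every integer, so its distance from any continuous function is at least $\tfrac12$ infinitely often; the correct error term is $\Ocal(1)$ (Apostol, Theorem 3.6, in the range $\beta>1$), carried entirely by the non-decaying oscillating sum $-\sum_{\ell\le y}\vartheta(y/\ell)\ell^{-2ms}$. Consequently your argument only yields \eqref{sum_th123} and \eqref{sum_th124} with error $\Ocal(1)$ --- which swallows the constant terms $-B_{2m}\zeta(2ms)/(2\zeta(s)^2)$ and $-B_{2m}\zeta(2ms)/(2\zeta(2s))$ --- unless the oscillating sum is retained explicitly, as is done for $H_1^{(s)}$ via $D_s(x)$. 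Be aware that the paper's own proof rests on the identical estimate \eqref{lem6-AA}, so you have faithfully reproduced its argument, gap included; but as a standalone proof the step fails at exactly the point you flagged as critical.
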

%%%%%%%%%%%%%%%%%%%%%%%%%%%%%%%%%%%%%%%%%%%%%%%%%%%%%%%%%%%%%%%%%%%%%%%%%%%%%%%%%%%%%%%%%%%%%%%%%%%%%%%%%%%%%%%%%%%%%%%%%%%%%%%%%%%%%%%%%%%%%%%%%%%%%%%%%%%%
%%%%%%%%%%%%%%%%%%%%%%%%%%%%%%%%%%%%%%%%%%%%%%%%%%%%%%%%%%%%%%%%%%%%%%%%%%%%%%%%%%%%%%%%%%%%%%%%%%%%%%%%%%%%%%%%%%%%%%%%%%%%%%%%%%%%%%%%%%%%%%%%%%%%%%%%%%%%%%%%
%%%%%%%%%%%%%%%%%%%%%%%%%%%%%%%%%%%%%%%%%%%%%%%%%%%%%%%%%%%%%%%%%%%%%%%%%%%%%%%%%%%%%%%%%%%%%%%%%%%%%%%%%%%%%%%%%%%%%%%%%%%%%%%%%%%%%%%%%%%
\begin{theorem}  
\label{th13}
Under the hypothesis of Theorem~\ref{th12}. For any fixed  real number $a$  such  that  $-1 < a < 0$,  we have
\begin{equation}   
\label{sum_th131}                                      
H_{1}^{(s)}(x;\phi_{s+a})  =  
-\frac{1}{2(1+a)\zeta(s+a+1)} x^{1+a} - \frac{\zeta(-a)}{2\zeta(s)} + \Ocal_{s,a}\left(x^{a}\right),                  
\end{equation} 
\begin{equation} 
 \label{sum_th132}
H_{1}^{(s)}(x;\psi_{s+a})  =  
-\frac{\zeta(s+a+1)}{2(1+a)\zeta(2s+2a+2)}x^{1+a} - \frac{\zeta(-a)\zeta(s)}{2\zeta(2s)}   
 + \Ocal_{s,a}\left(x^{a}\right), 
\end{equation}
\begin{equation}
 \label{sum_th133}
H_{2m}^{(s)}(x;\phi_{s+a}) =  \frac{ B_{2m}\zeta(2ms+a+1)}{(1+a)\zeta(s+a+1)^2} x^{1+a}         
 +  \frac{ B_{2m}\zeta(-a)\zeta(2ms)}{\zeta(s)^{2}} + \Ocal_{a,m,s}\left(x^{a}\right),       
\end{equation}
and  
\begin{equation}
\label{sum_th134}
H_{2m}^{(s)}(x;\psi_{s+a}) = 
 \frac{B_{2m}\zeta(2ms+a+1)}{(1+a)\zeta(2s+2a+2)} x^{1+a}               
+ \frac{B_{2m}\zeta(-a)\zeta(2ms)}{\zeta(2s)} + \Ocal_{a,m,s}\left(x^{a}\right). 
\end{equation}
\end{theorem}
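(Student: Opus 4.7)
The strategy is to reduce each case of Theorem~\ref{th13} to the closed-form representations \eqref{Ber-GCD1} and \eqref{Ber-GCD2m}, and then evaluate the resulting Dirichlet-convolution sums asymptotically. The key elementary tool is the expansion
$$\sum_{d \leq y} d^a = \frac{y^{1+a}}{1+a} + \zeta(-a) + O(y^a), \qquad -1<a<0,$$
together with the familiar identities $\sum_n \mu(n)/n^w = 1/\zeta(w)$ and $\sum_n |\mu(n)|/n^w = \zeta(w)/\zeta(2w)$ in their half-planes of absolute convergence.

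For \eqref{sum_th131} and \eqref{sum_th132} I would start from $H_1^{(s)}(x;f) = -\tfrac{1}{2}\sum_{n\leq x} f(n)/n^s$ (from \eqref{Ber-GCD1}) and use $\phi_{s+a}=\id_{s+a}*\mu$, resp.\ $\psi_{s+a}=\id_{s+a}*|\mu|$, to rewrite the inner sum as $\sum_{dm\leq x} d^a \mu(m)/m^s$, resp.\ with $|\mu|(m)$. Performing the $d$-summation first via the expansion above and then extending the remaining $m$-sum to infinity produces the stated main terms. The hypotheses $s\geq 2$ and $-1<a<0$ ensure $s+a>1$, so all surviving series converge absolutely and the secondary tails $O(x^{1-s})$ are absorbed into the claimed $O(x^a)$ error.

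For \eqref{sum_th133} and \eqref{sum_th134} I would apply $H_{2m}^{(s)}(x;f) = B_{2m}\sum_{d\ell\leq x}(f*\mu)(d)/(d^s\ell^{2ms})$ from \eqref{Ber-GCD2m}, together with the two further identities
$$\sum_{n=1}^{\infty}\frac{(\mu*\mu)(n)}{n^w}= \frac{1}{\zeta(w)^2} \quad \text{and} \quad \sum_{n=1}^{\infty}\frac{(|\mu|*\mu)(n)}{n^w}= \frac{1}{\zeta(2w)},$$
the second following from $(\zeta(w)/\zeta(2w))(1/\zeta(w))$. Writing $\phi_{s+a}*\mu = \id_{s+a}*(\mu*\mu)$ and applying the same $d$-first reduction as before yields
$$\sum_{d\leq y}\frac{(\phi_{s+a}*\mu)(d)}{d^s} = \frac{y^{1+a}}{(1+a)\zeta(s+a+1)^2} + \frac{\zeta(-a)}{\zeta(s)^2} + O(y^a),$$
and the $\psi_{s+a}$-analogue with $\zeta(s+a+1)^2,\zeta(s)^2$ replaced by $\zeta(2s+2a+2),\zeta(2s)$. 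Substituting into the outer $\ell$-sum and invoking the absolutely convergent values $\sum_\ell \ell^{-(2ms+1+a)} = \zeta(2ms+a+1)$ and $\sum_\ell \ell^{-2ms} = \zeta(2ms)$ produces the main terms in \eqref{sum_th133}--\eqref{sum_th134}.

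The delicate point in the $H_{2m}$ cases is the order of summation. Summing $\ell$ first would reduce the task to estimating $\sum_{d\leq x}(f*\mu)(d)/d^{s+1-2ms}$ with only the pointwise bound $(f*\mu)(d)\ll d^{s+a+\varepsilon}$ available, producing an error of size $x^{1+a+\varepsilon}$ that absorbs the main term. Carrying out the $d$-summation first, so that the surviving error relies on the rapid decay $\ell^{-2ms}$ rather than on cancellation inside $\mu*\mu$ or $|\mu|*\mu$, is what keeps the total error at $O(x^a)$.
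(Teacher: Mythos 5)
The proposal is correct and follows essentially the paper's own route: \eqref{sum_th131}--\eqref{sum_th132} are exactly \eqref{Ber-GCD1} combined with Lemma~\ref{lem5}, whose proof is the same innermost-power-sum computation you describe, and \eqref{sum_th133}--\eqref{sum_th134} are obtained by the same elementary convolution estimates with the same main terms and the same $O_{s,a,m}(x^{a})$ error. The only difference is cosmetic: in the $H_{2m}$ cases you bracket the triple convolution as $\bigl(\tfrac{\mu*\mu}{\id_s}*\id_a\bigr)*\id_{-2ms}$, first evaluating $\sum_{d\le y}(\phi_{s+a}*\mu)(d)d^{-s}$ and then summing over $\ell$, whereas the paper brackets it as $\tfrac{\mu*\mu}{\id_s}*\bigl(\id_a*\id_{-2ms}\bigr)$, i.e.\ uses \eqref{lem7-B} for $\sum_{\ell\le y}\sigma_{a+2ms}(\ell)\ell^{-2ms}$ with the $\mu*\mu$ (resp.\ $\mu*|\mu|$) factor outermost --- both orderings avoid the bad summation order you correctly flag and yield identical results.
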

%%%%%%%%%%%%%%%%%%%%%%%%%%%%%%%%%%%%%%%%%%%%%%%%%%%%%%%%%%%%%%%%%%%%%%%%%%%%%%%%%%%%%%%%%%%%%%%%%%%%%%%%%%%%%%%%%%%%%%%%%%%%%%%%%%%%%%%%%%%%%%%%%%%%
%%%%%%%%%%%%%%%%%%%%%%%%%%%%%%%%%%%%%%%%%%%%%%%%%%%%%%%%%%%%%%%%%%%%%%%%%%%%%%%%%%%%%%%%%%%%%%%%%%%%%%%%%%%%%%%%%%%%%%%%%%%%%%%%%%%%%%%%%%%%%%%%%%%%
%%%%%%%%%%%%%%%%%%%%%%%%%%%%%%%%%%%%%%%%%%%%%%%%%%%%%%%%%%%%%%%%%%%%%%%%%%%%%%%%%%%%%%%%%%%%%%%%%%%%%%%%%%%%%%%%%%%%%%%%%%%%%%%%%%%%%%%%%%%%%%%%%%%%
\subsection{ Dirichlet series associated with $H_{m}^{(s)}(x;f)$} 
From~\cite{K2}, we recall that
\begin{equation}                                                                 \label{Ber-AAA}                
\frac{1}{k^s}\sum_{j=0}^{k^{s}-1} B_{m}\left(\frac{j}{k^s}\right)s_{k}^{(s)}(j) 
=B_{m}\sum_{d\ell=k}\frac{f(d)}{d^s}\frac{g(\ell)}{\ell^{ms}}.  
\end{equation}
We introduce the notation $\nu_{m}^{(s)}(k;f, g)$ on the left-hand side of the above formula for convenience. Then, Dirichlet series associated with   $\nu_{f,g;m}^{(s)}(k)$ is given by 
\begin{align}                                                                    \label{Ber-MM}
\mathcal{L}_{m}^{(s)}(w; f,g) := \sum_{k=1}^{\infty}\frac{\nu_{m}^{(s)}(k;f, g)}{k^w},  
\end{align} 
which is absolutely  convergent for ${\rm Re} (w) >\beta$. 
We use identity \eqref{Ber-AAA} to proceed: 
\begin{equation*}                                                            
\mathcal{L}_{m}^{(s)}(w; f,g)  
= B_{m}  \sum_{k=1}^{\infty}\left(\sum_{\ell n=k}\frac{f(\ell)}{\ell^s}\frac{g(n)}{n^{ms}}\right)\frac{1}{k^w}  
= B_{m}  \sum_{\ell=1}^{\infty} \frac{f(\ell)}{\ell^{w+s}}\sum_{n=1}^{\infty} \frac{g(n)}{n^{w+ms}}.
\end{equation*}
This leads at once to 
\begin{equation}                                                                \label{sum_th876} 
\mathcal{L}_{m}^{(s)}(w; f,g) 
= B_{m}F(w+s) G(w+ms).   
\end{equation}
Now, we replace  $f*\mu$ by $f$ and $g=\1$ into \eqref{sum_th876}  to obtain 
\begin{equation*}                                                             
\mathcal{L}_{m}^{(s)}(w; f*\mu,\1) := \mathcal{L}_{m}^{(s)}(w;f) 
= B_{m} F(w+s) \frac{\zeta(w+ms)}{\zeta(w+s)}.    
\end{equation*}
Then, we deduce the following results:
\begin{corollary}                                                      \label{cor2}
Under the above notations, we have
\begin{equation*}                                                            
 \mathcal{L}_{m}^{(s)}(w;\phi_{s}*\mu)  
 =B_{m} \frac{\zeta(w)\zeta(w+ms)}{\zeta(w+s)^2},                          
 \quad 
  \mathcal{L}_{m}^{(s)}(w;\phi_{s+a}*\mu)  
 =B_{m}  \frac{\zeta(w-a)\zeta(w+ms)}{\zeta(w+s)^2},                               \end{equation*}
 \begin{equation*}
     \mathcal{L}_{m}^{(s)}(w;\psi_{s}*\mu)  
 =B_{m} \frac{\zeta(w)\zeta(w+ms)}{\zeta(2w+2s)}                         \quad                                                            
  \mathcal{L}_{m}^{(s)}(w;\psi_{s+a}*\mu)  
 =B_{m} \frac{\zeta(w-a)\zeta(w+ms)}{\zeta(2w+2s)}.    
\end{equation*}
\end{corollary}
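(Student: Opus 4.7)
The plan is to apply the master identity
\[
\mathcal{L}_{m}^{(s)}(w;f) = B_{m}\, F(w+s)\,\frac{\zeta(w+ms)}{\zeta(w+s)}
\]
derived just above the statement, specializing $F(w)=\sum_{k\geq 1}f(k)/k^{w}$ to each of the four choices $f=\phi_{s},\phi_{s+a},\psi_{s},\psi_{s+a}$. Since this is purely a Dirichlet series computation, the only real work is to identify $F(w+s)$ in each case and then simplify.

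First I would record the two standard Dirichlet series that underlie everything. Using $\phi_{b}=\id_{b}*\mu$, multiplicativity of Dirichlet convolution gives
\[
\sum_{k=1}^{\infty}\frac{\phi_{b}(k)}{k^{w}}=\frac{\zeta(w-b)}{\zeta(w)},
\]
valid for $\mathrm{Re}(w)>\max(1,b+1)$. Similarly, from $\psi_{b}=\id_{b}*|\mu|$ together with $\sum |\mu(k)|/k^{w}=\zeta(w)/\zeta(2w)$, one gets
\[
\sum_{k=1}^{\infty}\frac{\psi_{b}(k)}{k^{w}}=\frac{\zeta(w-b)\zeta(w)}{\zeta(2w)}.
\]
Both are classical and can simply be quoted.

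Next I would substitute $w\mapsto w+s$ with $b=s$ (resp.\ $b=s+a$) and plug into the master identity. For $f=\phi_{s}$ this yields $F(w+s)=\zeta(w)/\zeta(w+s)$, so
\[
\mathcal{L}_{m}^{(s)}(w;\phi_{s}*\mu)=B_{m}\,\frac{\zeta(w)}{\zeta(w+s)}\cdot\frac{\zeta(w+ms)}{\zeta(w+s)}=B_{m}\,\frac{\zeta(w)\zeta(w+ms)}{\zeta(w+s)^{2}};
\]
for $f=\phi_{s+a}$ the shift gives $\zeta(w-a)/\zeta(w+s)$, producing the stated $\zeta(w-a)\zeta(w+ms)/\zeta(w+s)^{2}$; for $f=\psi_{s}$ one gets $F(w+s)=\zeta(w)\zeta(w+s)/\zeta(2w+2s)$, and the factor $\zeta(w+s)$ cancels against the $\zeta(w+s)$ in the denominator of the master identity to leave $\zeta(w)\zeta(w+ms)/\zeta(2w+2s)$; and $f=\psi_{s+a}$ is identical with $\zeta(w)$ replaced by $\zeta(w-a)$.

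No genuine obstacle arises: the only point to watch is the region of absolute convergence, which I would set as $\mathrm{Re}(w)>\max(1,1+a,1-s,1-s-a,1-ms)$ so that every $\zeta$-factor lies in its half-plane of absolute convergence and the rearrangements of double sums used to derive the master identity are legitimate. Under that constraint all four identities follow by direct substitution, completing the proof of Corollary~\ref{cor2}.
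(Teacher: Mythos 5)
Your proposal is correct and follows essentially the same route as the paper: the authors likewise deduce the corollary immediately from the identity $\mathcal{L}_{m}^{(s)}(w;f)=B_{m}F(w+s)\zeta(w+ms)/\zeta(w+s)$ by inserting the classical evaluations $\sum_{k\ge1}\phi_{b}(k)k^{-w}=\zeta(w-b)/\zeta(w)$ and $\sum_{k\ge1}\psi_{b}(k)k^{-w}=\zeta(w-b)\zeta(w)/\zeta(2w)$ at $w+s$ with $b=s$ or $b=s+a$. The only difference is that the paper leaves these substitutions implicit, whereas you write them out together with a (correct, if slightly overcautious) convergence region.
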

%%%%%%%%%%%%%%%%%%%%%%%%%%%%%%%%%%%%%%%%%%%%%%%%%%%%%%%%%%%%%%%%%%%%%%%%%%%%%%%%%%%%%%%%%%%%%%%%%%%%%%%%%%%%%%%%%%%%%%%%%%%%%%%%%%%%%%%%%%%%%%%%%%%%
%%%%%%%%%%%%%%%%%%%%%%%%%%%%%%%%%%%%%%%%%%%%%%%%%%%%%%%%%%%%%%%%%%%%%%%%%%%%%%%%%%%%%%%%%%%%%%%%%%%%%%%%%%%%%%%%%%%%%%%%%%%%%%%%%%%%%%%%%%%%%%%%%%%%

\section{ Auxiliary results}

Before going into the proof of main results, we need to  give  some  auxiliary  lemmas. 
%%%%%%%%%%%%%%%%%%%%%%%%%%%%%%%%%%%%%%%%%%%%%%%%%%%%%%%%%%%%%%%%%%%%%%%%%%%%%%%%%%%%%%%%%%%%%%%%%%%%%%%%%%%%%%%%%%%%%%%%%%%%%%%%%%%%%%%%%%%%%%%%%%%%
\begin{lemma} 
\label{lem4}
For any sufficiently large  number $x>1$  and  fixed  integer  $s\geq 2$, 
we have 
\begin{align}                                                                                 \label{lem41}                          
&\sum_{n\leq x}\frac{\phi_{s}(n)}{n^s}=
\frac{1}{\zeta(s+1)}x^{} + D_{s}(x) + \Ocal_{s}\left(x^{1-s}\right),     
\\                                                                
\label{lem42}
&\sum_{n\leq x}\frac{\psi_{s}(n)}{n^s} =
\frac{\zeta(s+1)}{\zeta(2s+2)} x^{} + {\widetilde{D_{s}}}(x) + \Ocal_{s}\left(x^{1-s}\right), 
\end{align}
where $D_{s}(x)$ and ${\widetilde{D_{s}}}(x)$ are defined by \eqref{lem411} and $\eqref{lem421}$.
\end{lemma}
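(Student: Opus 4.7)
The plan is a routine Dirichlet convolution / hyperbola-style manipulation, the same for both identities. I focus on \eqref{lem41}; \eqref{lem42} is obtained by the same steps with $\mu$ replaced by $|\mu|$ throughout.

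First I would rewrite the summand using the definition $\phi_{s}=\id_{s}*\mu$, which gives
\[
\frac{\phi_{s}(n)}{n^{s}}=\sum_{d\mid n}\frac{\mu(d)}{d^{s}},
\qquad
\frac{\psi_{s}(n)}{n^{s}}=\sum_{d\mid n}\frac{|\mu(d)|}{d^{s}}.
\]
Interchanging the order of summation yields
\[
\sum_{n\leq x}\frac{\phi_{s}(n)}{n^{s}}
=\sum_{d\leq x}\frac{\mu(d)}{d^{s}}\Bigl[\frac{x}{d}\Bigr]
=x\sum_{d\leq x}\frac{\mu(d)}{d^{s+1}}
-\sum_{d\leq x}\frac{\mu(d)}{d^{s}}\vartheta\!\left(\frac{x}{d}\right)
-\frac{1}{2}\sum_{d\leq x}\frac{\mu(d)}{d^{s}},
\]
after substituting $[x/d]=x/d-\vartheta(x/d)-\tfrac12$.

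Next I would complete the two Dirichlet series to infinity using the standard tail estimates
\[
\sum_{d\leq x}\frac{\mu(d)}{d^{s+1}}=\frac{1}{\zeta(s+1)}+\Ocal_{s}\!\left(x^{-s}\right),
\qquad
\sum_{d\leq x}\frac{\mu(d)}{d^{s}}=\frac{1}{\zeta(s)}+\Ocal_{s}\!\left(x^{1-s}\right),
\]
which hold because $s\geq2$. After multiplying the first by $x$ the error absorbs into $\Ocal_{s}(x^{1-s})$. Collecting pieces gives
\[
\sum_{n\leq x}\frac{\phi_{s}(n)}{n^{s}}
=\frac{x}{\zeta(s+1)}
+\Bigl\{-\sum_{d\leq x}\frac{\mu(d)}{d^{s}}\vartheta\!\left(\frac{x}{d}\right)-\frac{1}{2\zeta(s)}\Bigr\}
+\Ocal_{s}\!\left(x^{1-s}\right),
\]
and the bracketed expression is exactly $D_{s}(x)$ by \eqref{lem411}, proving \eqref{lem41}.

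For \eqref{lem42} I would carry out the identical computation with $|\mu|$, using the well-known identities $\sum_{d=1}^{\infty}|\mu(d)|/d^{s+1}=\zeta(s+1)/\zeta(2s+2)$ and $\sum_{d=1}^{\infty}|\mu(d)|/d^{s}=\zeta(s)/\zeta(2s)$, and recognising the bracketed term as $\widetilde{D_{s}}(x)$ from \eqref{lem421}. The only real point to watch is the constant pieces coming from $-\tfrac12\sum\mu(d)/d^{s}$ and its $|\mu|$-analogue: these have to be completed to $-1/(2\zeta(s))$ and $-\zeta(s)/(2\zeta(2s))$ so that they exactly fit into the definitions of $D_{s}$ and $\widetilde{D_{s}}$; this is where I would be most careful, since the error incurred by completing the tail is $\Ocal_{s}(x^{1-s})$, which is precisely the error term advertised in the lemma.
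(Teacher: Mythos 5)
Your proposal is correct and follows essentially the same route as the paper: write $\phi_s/\!\id_s=(\mu/\!\id_s)*\1$ (resp.\ with $|\mu|$), swap the order of summation, split $[x/d]=x/d-\vartheta(x/d)-\tfrac12$, and complete the resulting Dirichlet series to $1/\zeta(s+1)$, $1/\zeta(s)$ (resp.\ $\zeta(s+1)/\zeta(2s+2)$, $\zeta(s)/\zeta(2s)$) with $\Ocal_s(x^{1-s})$ tails. The only cosmetic difference is that the paper cites an earlier reference for \eqref{lem41} and writes out only the $\psi_s$ case, whereas you carry out the $\phi_s$ computation explicitly; the substance is identical.
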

%%%%%%%%%%%%%%%%%%%%%%%%%%%%%%%%%%%%%%%%%%%%%%%%%%%%%%%%%%%%%%%%%%%%%%%%%%%%%%%%%%%%%%%%%%%%%%%%%%%%%%%%%%%%%%%%%%%%%%%%%%%%%%%%%%%%%%%%%%%%%%%%%%%%
\begin{proof}  
The formula \eqref{lem41} follows from (2.9) in \cite{K}. 
On the other hand, we have 
\begin{eqnarray*}
   \sum_{n\leq x}\frac{\psi_{s}(n)}{n^s} 
&=& \sum_{d\leq x}\frac{|\mu(d)|}{d^s}\sum_{\ell\leq x/d}1  \\
&=& x \sum_{d\leq x}\frac{|\mu(d)|}{d^{s+1}} 
 -   \sum_{d\leq x}\frac{|\mu(d)|}{d^{s}}\vartheta\left(\frac{x}{d}\right) 
 -   \frac12 \sum_{\ell\leq x}\frac{|\mu(\ell)|}{\ell^s}   \\
&=& \frac{\zeta(s+1)}{\zeta(2s+2)}x 
 - \sum_{d\leq x}\frac{|\mu(d)|}{d^{s}}\vartheta\left(\frac{x}{d}\right) 
 - \frac{\zeta(s)}{2\zeta(2s)} + \Ocal_{s}\left(x^{1-s}\right), 
\end{eqnarray*}
which completes the proof of~\eqref{lem42}.
\end{proof}
%%%%%%%%%%%%%%%%%%%%%%%%%%%%%%%%%%%%%%%%%%%%%%%%%%%%%%%%%%%%%%%%%%%%%%%%%%%%%%%%%%%%%%%%%%%%%%%%%%%%%%%%%%%%%%%%%%%%%%%%%%%%%%%%%%%%%%%%%%%%%%%%%%%%
\begin{lemma} 
\label{lem5}
For any sufficiently large  number $x>1$,   
fixed  integer  $s\geq 2$  and  fixed  number $a$ such that $-1<a<0$, we have 
\begin{align}                                                                
 \label{lem51}
&\sum_{n\leq x}\frac{\phi_{s+a}(n)}{n^s} =
\frac{1}{(1+a)\zeta(s+a+1)}x^{1+a} + \frac{\zeta(-a)}{\zeta(s)} + \Ocal_{a,s}\left(x^{a}\right),                                   
\\ &  
\label{lem52}
\sum_{n\leq x}\frac{\psi_{s+a}(n)}{n^s} =
\frac{\zeta(s+a+1)}{(1+a)\zeta(2s+2a+2)}x^{1+a} + \frac{\zeta(-a)\zeta(s)}{\zeta(2s)} + \Ocal_{a,s}\left(x^{a}\right).            
\end{align}
\end{lemma}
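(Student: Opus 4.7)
The plan is to reduce both sums to a Dirichlet hyperbola / swap-of-sums argument and then apply the standard Euler--Maclaurin asymptotic
$$
\sum_{e\leq y} e^{a} = \frac{y^{1+a}}{1+a} + \zeta(-a) + O(y^{a}) \qquad (-1<a<0),
$$
where the constant $\zeta(-a)$ is obtained from analytic continuation of $\zeta$.

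First, for \eqref{lem51} I write $\phi_{s+a} = \id_{s+a}\ast\mu$ and swap the order of summation to obtain
$$
\sum_{n\leq x}\frac{\phi_{s+a}(n)}{n^{s}}
= \sum_{d\leq x}\frac{\mu(d)}{d^{s}}\sum_{e\leq x/d} e^{a}.
$$
Insert the asymptotic above and distribute, producing three pieces:
$$
\frac{x^{1+a}}{1+a}\sum_{d\leq x}\frac{\mu(d)}{d^{s+1+a}}
\;+\;\zeta(-a)\sum_{d\leq x}\frac{\mu(d)}{d^{s}}
\;+\;O\!\left(x^{a}\sum_{d\leq x}\frac{1}{d^{s+a}}\right).
$$
Next, extend the two M\"obius sums to $\infty$ using $\sum_{d\leq x}\mu(d)/d^{s'} = 1/\zeta(s') + O(x^{1-s'})$, which is applicable for $s'=s+1+a>1$ and $s'=s>1$. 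This yields the main terms $\frac{x^{1+a}}{(1+a)\zeta(s+a+1)}$ and $\frac{\zeta(-a)}{\zeta(s)}$, together with tails of size $O(x^{1+a}\cdot x^{-(s+a)})=O(x^{1-s})$ and $O(x^{1-s})$ respectively. The final $O$-piece is bounded by $O(x^{a})$ since $s+a>1$ makes $\sum d^{-(s+a)}$ convergent. Because $s\geq 2$ and $a>-1$ give $1-s\leq -1<a$, the $x^{1-s}$ contributions are absorbed into $O(x^{a})$, producing \eqref{lem51}.

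For \eqref{lem52} the exact same argument applies with $\mu$ replaced by $|\mu|$ throughout, since $\psi_{s+a} = \id_{s+a}\ast|\mu|$. The only change is that the Dirichlet series $\sum_{d=1}^\infty |\mu(d)|/d^{s'}$ evaluates to $\zeta(s')/\zeta(2s')$ rather than $1/\zeta(s')$; applied with $s'=s+1+a$ and $s'=s$ this gives the coefficients $\frac{\zeta(s+a+1)}{\zeta(2s+2a+2)}$ and $\frac{\zeta(s)}{\zeta(2s)}$ appearing in \eqref{lem52}, and the tail bookkeeping is identical.

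There is no real obstacle here: both identities are routine hyperbola-method computations. The only point that requires a bit of attention is verifying that every error term produced along the way can be absorbed uniformly into $O_{s,a}(x^{a})$, which rests precisely on the two inequalities $s\geq 2$ and $s+a>1$ provided by the hypotheses.
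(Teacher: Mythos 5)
Your argument is exactly the paper's proof: write $\phi_{s+a}=\id_{s+a}*\mu$ (resp.\ $\psi_{s+a}=\id_{s+a}*|\mu|$), swap the summations, insert $\sum_{\ell\leq y}\ell^{a}=\frac{y^{1+a}}{1+a}+\zeta(-a)+O_a(y^{a})$, and complete the M\"obius sums to the Dirichlet series $1/\zeta(\cdot)$ and $\zeta(\cdot)/\zeta(2\cdot)$, with the same absorption of the $O(x^{1-s})$ tails into $O_{s,a}(x^{a})$. The proposal is correct and requires no changes.
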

%%%%%%%%%%%%%%%%%%%%%%%%%%%%%%%%%%%%%%%%%%%%%%%%%%%%%%%%%%%%%%%%%%%%%%%%%%%%%%%%%%%%%%%%%%%%%%%%%%%%%%%%%%%%%%%%%%%%%%%%%%%%%%%%%%%%%%%%%%%%%%%%%%%%
\begin{proof}  
From see~\cite{Ap}, Theorem 3.2 (b), for $-1<a<0$, we have
\begin{equation}                                                                             
\label{lem1-apo} 
 \sum_{n\leq x}n^{a} = \frac{x^{1+a}}{1+a} + \zeta(-a) + \Ocal_{a}\left(x^{a}\right),  
\end{equation}
We use the above formula to get 
\begin{eqnarray*}
   \sum_{n\leq x}\frac{\phi_{s+a}(n)}{n^s} 
&=& \sum_{d\leq x}\frac{\mu(d)}{d^s}\sum_{\ell\leq x/d}\ell^{a}  \\
&=& \frac{x^{1+a}}{1+a}\sum_{d\leq x}\frac{\mu(d)}{d^{s+a+1}} 
 + \zeta(-a)\sum_{d\leq x}\frac{\mu(d)}{d^{s}}  
 + \Ocal_{a}\left(x^{a} \sum_{\ell\leq x}\frac{1}{\ell^{s+a}}\right)   \\
&=& \frac{1}{(1+a)\zeta(s+a+1)}x^{1+a} 
 + \frac{\zeta(-a)}{\zeta(s)} + \Ocal_{a,s}\left(x^{a}\right),   
\end{eqnarray*}
which completes the proof of \eqref{lem51}.  
Similarly we get \eqref{lem52}.   
\end{proof}
%%%%%%%%%%%%%%%%%%%%%%%%%%%%%%%%%%%%%%%%%%%%%%%%%%%%%%%%%%%%%%%%%%%%%%%%%%%%%%%%%%%%%%%%%%%%%%%%%%%%%%%%%%%%%%%%%%%%%%%%%%%%
\section{Proofs} 
\subsection{Proof of Theorem \ref{th8}} 
%%%%%%%%%%%%%%%%%%%%%%%%%%%%%%%%%%%%%%%%%%%%%%%%%%%%%%%%%%%%%%%%%%%%%%%%%%%%%%%%%%%%%%%%%%%%%%%%%%%%%%%%%%%%%%%%%%%%%%%%%%%
For $f=\phi_{s}$, we use identity \eqref{Kiuchi-Gamma} to write

\begin{equation*}                                                             
 A^{(s)}(x;\phi_{s}) 
=\log\sqrt{2\pi}  \sum_{n\leq x}\frac{\phi_{s}*\phi_{s}(n)}{n^s} 
  - \log\sqrt{2\pi}\sum_{n\leq x}\frac{\phi_{s}(n)}{n^s} 
  - \frac{s}{2} \sum_{n\leq x}\frac{\phi_{s}*\Lambda(n)}{n^s}.    
\end{equation*}
To produce an asymptotic formula of $ A^{(s)}(x;\phi_{s})$, we need to estimate the first and  third sums above. Using the identities    
$$
\frac{\phi_{s}*\phi_{s}}{\id_s}
=\left(\frac{\mu}{\id_{s}}*\frac{\phi_{s}}{\id_{s}}\right)*\1  
= \frac{\mu*\mu}{\id_{s}}*\tau, 
$$
and \eqref{Dirichlet},  and formulas 
\begin{align}                                                                 \label{lem6-A}
&\sum_{d\leq x}\frac{\mu*\mu(d)}{d^{s+1}} = \frac{1}{\zeta(s+1)^2} + \Ocal_{s}\left(x^{-s} \log x \right),
\\ &                                                                                \label{lem6-B}
\sum_{d\leq x}\frac{\mu*\mu(d)}{d^{s+1}}\log d  = 2 \frac{\zeta'(s+1)}{\zeta(s+1)^{3}} + \Ocal_{s}\left(x^{-s} (\log x)^{2} \right),
\end{align}
we get  
\begin{eqnarray}                                                                                          \label{lem6-mu}
\sum_{n\leq x} \frac{\phi_{s}*\phi_{s}(n)}{n^s}  
&=&  \sum_{d\leq x}  \frac{\mu*\mu(d)}{d^s} \sum_{\ell\leq x/d} \tau(\ell)   \nonumber \\ 
&=& x\log x \sum_{d\leq x}\frac{\mu*\mu(d)}{d^{s+1}} 
-  x\sum_{d\leq x}\frac{\mu*\mu(d)}{d^{s+1}}\log d  \nonumber  \\ &&
+ (2\gamma -1) x \sum_{d\leq x}\frac{\mu*\mu(d)}{d^{s+1}} 
+  \sum_{d\leq x}\frac{\mu*\mu(d)}{d^s}\Delta_{}\left(\frac{x}{d}\right) \nonumber  \\
&=& \frac{1}{\zeta(s+1)^2}   x\log x 
+ \frac{1}{\zeta(s+1)^2}\left(2\gamma -1 -2 \frac{\zeta'(s+1)}{\zeta(s+1)}\right) x  \nonumber  \\
&&+  \sum_{d\leq x}\frac{\mu*\mu(d)}{d^s}\Delta_{}\left(\frac{x}{d}\right) + \Ocal_{s}\left(x^{1-s}(\log x)^2\right).    
\end{eqnarray}
For the third sum of  $A^{(s)}(x;\phi_{s})$, we use the fact that    
\begin{equation*}
\frac{\phi_{s}*\Lambda}{\id_{s}}  = 
\frac{\mu*\Lambda}{\id_{s}}*\1,
\quad  \text{and} \quad 
\sum_{n=1}^{\infty}\frac{\mu*\Lambda(n)}{n^{s+1}} = -\frac{\zeta'(s+1)}{\zeta(s+1)^2},
\end{equation*}
to write
\begin{eqnarray*}
\label{lem8-phi}
  \sum_{n\leq x} \frac{\phi_{s}*\Lambda(n)}{n^s} 
&=&  \sum_{d\leq x}  \frac{\mu*\Lambda(d)}{d^{s}} \sum_{\ell\leq x/d} 1   \\
&=& x \sum_{d\leq x}\frac{\mu*\Lambda(d)}{d^{s+1}} 
-    \sum_{d\leq x}\frac{\mu*\Lambda(d)}{d^{s}}\vartheta\left(\frac{x}{d}\right)  
-  \frac{1}{2} \sum_{d\leq x}\frac{\mu*\Lambda(d)}{d^s}   \\ 
&=& - \frac{\zeta'(s+1)}{\zeta(s+1)^2} x 
 -   \sum_{d\leq x}\frac{\mu*\Lambda(d)}{d^s} \vartheta\left(\frac{x}{d}\right) 
+  \frac{\zeta'(s)}{2\zeta(s)^2}  +  \Ocal_{s}\left(\frac{(\log x)^{2}}{x^{s-1}}\right).     
\end{eqnarray*}
From the latter formula, identity \eqref{lem41}, and \eqref{lem8-phi}, we get the identity \eqref{sum_th81}. \\
%%%%%%%%%%%%%%%%%%%%%%%%%%%%%%%%%%%%%%%%%%%%%%%%%%%%%%%%%%%%%%%%%%%%%%%%%%%%%%%%%%%%%%%%%%%%%%%%%%%%%%%%%%%%%%%%%%%%%%%%%%%%%
Now, we recall that 
\begin{equation*}                                                              
 A^{(s)}(x;\phi_{s+a}) 
=\log\sqrt{2\pi}  \sum_{n\leq x}\frac{\phi_{s}*\phi_{s+a}(n)}{n^s} 
 - \log\sqrt{2\pi}\sum_{n\leq x}\frac{\phi_{s+a}(n)}{n^s} 
  - \frac{s}{2} \sum_{n\leq x}\frac{\phi_{s+a}*\Lambda(n)}{n^s}.   
\end{equation*}
%%%%%%%%%%%%%%%%%%%%%%%%%%%%%%%%%%%%%%%%%%%%%%%%%%%%%%%%%%%%%%%%%%%%%%%%%%%%%%%%%%%%%%%%%%%%%%%%%%%%%%%%%%%%%%%%%%%%%%%%%%%
In order to prove the identity~\eqref{sum_th91}, we calculate each of three sums on the right-hand side above separately. For the first sum,
we use \eqref{Delta-a},  \eqref{lem6-A} and 
$$
\frac{\phi_{s+a}*\phi_{s}}{\id_s}=\frac{\phi_{s+a}*\mu}{\id_{s}}*\1 = 
\frac{\mu*\mu}{\id_{s}}*\sigma_{a} 
$$
to obtain    
\begin{eqnarray}                                                                 \label{lem7-mu} 
\sum_{n\leq x} \frac{\phi_{s+a}*\phi_{s}(n)}{n^s}   
&=&  \sum_{d\leq x}  \frac{\mu*\mu(d)}{d^{s}} \sum_{\ell\leq x/d} \sigma_{a}(\ell)  \nonumber  \\
&=& \zeta(1-a)x \sum_{d\leq x}\frac{\mu*\mu(d)}{d^{s+1}} 
+  \frac{\zeta(1+a)}{1+a} x^{1+a} \sum_{d\leq x}\frac{\mu*\mu(d)}{d^{s+a+1}}  \nonumber \\
 && -  \frac{\zeta(-a)}{2} \sum_{d\leq x}\frac{\mu*\mu(d)}{d^s} 
+  \sum_{d\leq x}\frac{\mu*\mu(d)}{d^s}\Delta_{a}\left(\frac{x}{d}\right)    \nonumber \\
&=& \frac{\zeta(1-a)}{\zeta(s+1)^2} x 
+  \frac{\zeta(1+a)}{(1+a)\zeta(s+a+1)^2} x^{1+a} 
-  \frac{\zeta(-a)}{2\zeta(s)^2}                                     \nonumber  \\
&& +  \sum_{d\leq x}\frac{\mu*\mu(d)}{d^s}\Delta_{a}\left(\frac{x}{d}\right)
+  \Ocal_{s,a}\left(x^{1-s}\log x\right). 
\end{eqnarray}
For the third sum of $A^{(s)}(x;\phi_{s+a})$, we use \eqref{lem1-apo} to obtain the formula
\begin{eqnarray*}                                                               \label{lem8-phia}
\sum_{n\leq x} \frac{\phi_{s+a}*\Lambda(n)}{n^s} 
&=&   \sum_{d\leq x}\frac{\mu*\Lambda(d)}{d^s}\sum_{\ell\leq x/d}\ell^{a}   \\ 
&=& -\frac{\zeta'(s+a+1)}{(1+a)\zeta(s+a+1)^2} x^{1+a} 
-   \frac{\zeta(-a)\zeta'(s)}{\zeta(s)^{2}}  + \Ocal_{s,a}\left(x^{a}\right). 
\end{eqnarray*}
Combining this latter, \eqref{lem51} and \eqref{lem8-phia}, we get \eqref{sum_th91}.
%%%%%%%%%%%%%%%%%%%%%%%%%%%%%%%%%%%%%%%%%%%%%%%%%%%%%%%%%   %%%%%%%%%%%%%%%%%%%%%%%%%%%%%%%%%%%%%%%%%%%%%%%%%%%%%%%%%%%%%%%%%%%%
\subsection{Proof of  Theorem \ref{th10}} 
Taking $f=\psi_{s}$  into \eqref{Kiuchi-Gamma}, we have 
 \begin{equation*}                                                               A^{(s)}(x;\psi_{s}) 
=\log\sqrt{2\pi}  \sum_{n\leq x}\frac{\psi_{s}*\phi_{s}(n)}{n^s} 
 - \log\sqrt{2\pi}\sum_{n\leq x}\frac{\psi_{s}(n)}{n^s} 
   - \frac{s}{2} \sum_{n\leq x}\frac{\psi_{s}*\Lambda(n)}{n^s}.  
\end{equation*}
We notice that the first sum on the right-hand side of the above is rewritten as    
$$
\frac{\psi_{s}*\phi_{s}}{\id_s}=\left(\frac{\psi_{s}}{\id_{s}}*\frac{\mu}{\id_{s}}\right)*\1  
= \frac{\mu*|\mu|}{\id_{s}}*\tau. 
$$
Using \eqref{Dirichlet} and formulas 
\begin{align}                                                             
\label{lem6-C}
&\sum_{d\leq x}\frac{\mu*|\mu|(d)}{d^{s+1}} = \frac{1}{\zeta(2s+2)} + \Ocal_{s}\left(x^{-s} \log x \right),
\\&  \nonumber                                                             
\sum_{d\leq x}\frac{\mu*|\mu|(d)}{d^{s+1}}\log d  = 2 \frac{\zeta'(2s+2)}{\zeta(2s+2)^{2}} + \Ocal_{s}\left(x^{-s} (\log x)^{2} \right),
\end{align}
we infer    
\begin{eqnarray}                                                                \label{lem6-psi} 
 \sum_{n\leq x} \frac{\psi_{s}*\phi_{s}(n)}{n^s}  
&=&  \sum_{d\leq x}  \frac{\mu*|\mu|(d)}{d^s} \sum_{\ell\leq x/d} \tau(\ell)   \nonumber \\
&=& x\log x \sum_{d\leq x}\frac{\mu*|\mu|(d)}{d^{s+1}} 
-  x\sum_{d\leq x}\frac{\mu*|\mu|(d)}{d^{s+1}}\log d                     \nonumber  \\
&& + (2\gamma -1) x \sum_{d\leq x}\frac{\mu*|\mu|(d)}{d^{s+1}} 
+  \sum_{d\leq x}\frac{\mu*|\mu|(d)}{d^s}\Delta_{}\left(\frac{x}{d}\right)       \nonumber \\ 
&=& \frac{1}{\zeta(2s+2)}  x\log x 
+ \frac{1}{\zeta(2s+2)}\left(2\gamma -1 -2 \frac{\zeta'(2s+2)}{\zeta(2s+2)}\right) x   \nonumber  \\
&& +  \sum_{d\leq x}\frac{\mu*|\mu|(d)}{d^s}\Delta_{}\left(\frac{x}{d}\right) + \Ocal_{s}\left(x^{1-s}(\log x)^2\right).    
\end{eqnarray}
For the third sum of $ A^{(s)}(x;\psi_{s}) $, we use the fact that  
$$
\frac{\psi_{s}*\Lambda}{\id_{s}}  = 
\frac{|\mu|*\Lambda}{\id_{s}}*\1 
\quad {\rm and}  \quad 
\sum_{n=1}^{\infty}\frac{|\mu|*\Lambda(n)}{n^{s+1}} = -\frac{\zeta'(s+1)}{\zeta(2s+2)}
$$
to obtain   
\begin{align}                                                                   \label{lem8-psi}
&  \sum_{n\leq x} \frac{\psi_{s}*\Lambda(n)}{n^s} 
=  \sum_{d\leq x}  \frac{|\mu|*\Lambda(d)}{d^{s}} \sum_{\ell\leq x/d} 1  \nonumber  \\
&= x \sum_{d\leq x}\frac{|\mu|*\Lambda(d)}{d^{s+1}} 
-    \sum_{d\leq x}\frac{|\mu|*\Lambda(d)}{d^{s}}\vartheta\left(\frac{x}{d}\right)  
-  \frac{1}{2} \sum_{d\leq x}\frac{|\mu|*\Lambda(d)}{d^s} \nonumber \\ 
&= - \frac{\zeta'(s+1)}{\zeta(2s+2)} x 
 -   \sum_{d\leq x}\frac{|\mu|*\Lambda(d)}{d^s} \vartheta\left(\frac{x}{d}\right) +  \frac{\zeta'(s)}{2\zeta(2s)} 
+  \Ocal_{s}\left(x^{1-s}(\log x)^{2}\right). 
\end{align}
From \eqref{lem6-psi}, \eqref{lem42} and \eqref{lem8-psi}, we get \eqref{sum_th101}.\\

%%%%%%%%%%%%%%%%%%%%%%%%%%%%%%%%%%%%%%%%%%%%%%%%%%%%%%%%%%%%%%%%%%%%%%%%%%%%%%%%%%%%%%%%%%%%%%%%%%%%%%%%%%%%%%%%%%%%%%%%%%%%%%%%%%%%%%%%%%%%%%%%%%%%%%%%%%%%%%%%%%%%%%%%%%%%%%%%%%%%%%%%%%%%%%%%%%%%%%%%%%%%%%%%%%%%%%%%%%%%%%%%%%%%%%%%%%%%%%%%%%%%%%%%%%%%%%%%%%%%%%%%%%%%%%%
In order to prove the identity~\eqref{sum_th111}, we are going to estimate each term on the right-hand side of the following formula 
\begin{equation*}                                                              
 A^{(s)}(x;\psi_{s+a}) 
=\log\sqrt{2\pi}  \sum_{n\leq x}\frac{\psi_{s+a}*\phi_{s}(n)}{n^s} 
 - \log\sqrt{2\pi}\sum_{n\leq x}\frac{\psi_{s+a}(n)}{n^s} 
  - \frac{s}{2} \sum_{n\leq x}\frac{\psi_{s+a}*\Lambda(n)}{n^s}.  
\end{equation*}
Since  the identity 
$$
\frac{\phi_{s}*\psi_{s+a}}{\id_s}=\left(\frac{\mu}{\id_{s}}*\frac{\psi_{s+a}}{\id_{s}}\right)*\1  
= \frac{\mu*|\mu|}{\id_{s}}*\sigma_a,  
$$
and the treatment of the first sum above is similar to that used in the proof of Theorem~\ref{th8}. This yields 
\begin{multline}
\label{lem7-psi}
 \sum_{n\leq x} \frac{\psi_{s+a}*\phi_{s}(n)}{n^s}  
= \frac{\zeta(1-a)}{\zeta(2s+2)}x 
+  \frac{\zeta(1+a)}{(1+a)\zeta(2s+2a+2)} x^{1+a}                                            
-  \frac{\zeta(-a)}{2\zeta(2s)}  \\+  \sum_{d\leq x}\frac{\mu*|\mu|(d)}{d^s}\Delta_{a}\left(\frac{x}{d}\right)
+  \Ocal_{s,a}\left(x^{1-s}\log x\right),    
\end{multline}
where we used \eqref{lem6-C} instead of \eqref{lem6-A}.
Similarly, the third sum is 
\begin{equation*}
\label{lem8-psia} 
\sum_{n\leq x} \frac{\psi_{s+a}*\Lambda(n)}{n^s}   =
- \frac{\zeta'(s+a+1)}{(1+a)\zeta(2s+2a+2)} x^{1+a} - \frac{\zeta(-a)\zeta'(s)}{\zeta(2s)}  
+ \Ocal_{s,a}\left(x^{a}\right).  
\end{equation*}
From the above and \eqref{lem52}, the proof is complete. 
%%%%%%%%%%%%%%%%%%%%%%%%%%%%%%%%%%%%%%%%%%%%%%%%%%%%%%%%%%%%%%%%%%%%%%%%%%%%%%%%%%%%%%%%%%%%%%%%%%%%%%%%%%%%%%%%%%%%%%%%%%%%%%%%%%%%
\subsection{Proof of Theorems \ref{th12} and \ref{th13}}
By \eqref{Ber-GCD1} with replaced  $f$ by $\phi_s, \psi_s, \phi_{s+a}, \psi_{s+a}$ successively, 
and using \eqref{lem41}, \eqref{lem42}, \eqref{lem51} and \eqref{lem52}. 
We get at once \eqref{sum_th121}, \eqref{sum_th122}, \eqref{sum_th131} and \eqref{sum_th132}. 
To complete the proof of Theorems~\ref{th12} and \ref{th13}, it remains to prove the following relations 
\begin{equation}                                                                
\label{lem6-l2m} 
\sum_{d\ell\leq x}\frac{\phi_{s}*\mu(d)}{d^s}  \frac{1}{\ell^{2ms}} =
\frac{\zeta(2ms+1)}{\zeta(s+1)^2}x  - \frac{\zeta(2ms)}{2\zeta(s)^2}  + \Ocal_{m,s}\left(x^{1-s}\log x\right),                  
\end{equation}

\begin{equation}
\label{lem6-psi2m}
\sum_{d\ell\leq x}\frac{\psi_{s}*\mu(d)}{d^s}  \frac{1}{\ell^{2ms}} =
\frac{\zeta(2ms+1)}{\zeta(2s+2)}x  - \frac{\zeta(2ms)}{2\zeta(2s)}  + \Ocal_{m,s}\left(x^{1-s}\log x\right),                      
\end{equation}

\begin{equation}
\label{lem7-l2m}
\sum_{d\ell\leq x} \frac{\phi_{s+a}*\mu(d)}{d^s}  \frac{1}{\ell^{2ms}} =
\frac{\zeta(2ms+a+1)}{(1+a)\zeta(s+a+1)^2} x^{1+a}                               
 + \frac{\zeta(-a)\zeta(2ms)}{\zeta(s)^{2}} + \Ocal_{s,a,m}\left(x^{a}\right),   
\end{equation}
and 
\begin{equation}
\label{lem7-psi2m} 
\sum_{d\ell\leq x} \frac{\psi_{s+a}*\mu(d)}{d^s}  \frac{1}{\ell^{2ms}} =
\frac{\zeta(2ms+a+1)}{(1+a)\zeta(2s+2a+2)} x^{1+a}                              
+ \frac{\zeta(-a)\zeta(2ms)}{\zeta(2s)} + \Ocal_{s,a,m}\left(x^{a}\right).       
\end{equation}
We start with \eqref{lem6-l2m}. Using  the identity
\begin{equation*}
\frac{\phi_{s}*\mu}{\id_{s}}*\frac{1}{\id_{2ms}} = \frac{\mu*\mu}{\id_{s}}*\sigma_{-2ms},  
\end{equation*}
the formula    
\begin{equation}                                                                  \label{lem6-AA}
\sum_{\ell\leq x}\sigma_{-2ms}(\ell)  = 
\zeta(2ms+1) x -\frac{\zeta(2ms)}{2}  + \Ocal_{m,s}\left(x^{1-2ms}\right)
\end{equation}
and \eqref{lem6-A}, we find that  
\begin{eqnarray*}
 \sum_{d\ell\leq x}\frac{\phi_{s}*\mu(d)}{d^s}\frac{1}{\ell^{2ms}} &=& 
  \sum_{d\leq x}\frac{\mu*\mu(d)}{d^s}\sum_{\ell\leq x/d}\sigma_{-2ms}(\ell)  \\
&=&\zeta(2ms+1) x \sum_{d\leq x}\frac{(\mu*\mu)(d)}{d^{s+1}} 
- \frac{\zeta(2ms)}{2} \sum_{d\leq x}\frac{\mu*\mu(d)}{d^s}   \\ 
&& +  O\left(x^{1-2ms} \sum_{d\leq x}\frac{\tau(d)}{d^{s+1-2ms}}\right) \\
&=&\frac{\zeta(2ms+1)}{\zeta(s+1)^2}x  - \frac{\zeta(2ms)}{2\zeta(s)^2}  + \Ocal_{s,m}\left(x^{1-s}\log x\right), 
\end{eqnarray*}
as required.
By a similar argument to the above, by using \eqref{lem6-C} instead of \eqref{lem6-A}, we obtain \eqref{lem6-psi2m}.   
For \eqref{lem7-l2m}, using the identity
$$
\frac{\phi_{s+a}*\mu}{\id_{s}}*\frac{1}{\id_{2ms}} = 
\frac{\mu*\mu}{\id_{s}}*\frac{\sigma_{a+2ms}}{\id_{2ms}}, 
$$
and  the formula   
\begin{equation}                                                                              \label{lem7-B}
\sum_{\ell\leq x}\frac{\sigma_{a+2ms}(\ell)}{\ell^{2ms}} = 
\frac{\zeta(a+2ms+1)}{a+1}x^{a+1} + \zeta(-a)\zeta(2ms) + \Ocal_{s,a,m},\left(x^{a}\right),
\end{equation}
we get  
\begin{eqnarray*}
\sum_{d\ell\leq x}\frac{\phi_{s+a}*\mu(d)}{d^s}\frac{1}{\ell^{2ms}} 
&=& \sum_{d\ell\leq x}\frac{\mu*\mu(d)}{d^s}\frac{\sigma_{a+2ms}(\ell)}{\ell^{2ms}}  \\ 
&=&\frac{\zeta(2ms+a+1)}{(1+a)\zeta(s+a+1)^2} x^{1+a} 
+ \frac{\zeta(-a)\zeta(2ms)}{\zeta(s)^{2}} + \Ocal_{s,a,m}\left(x^{a}\right).  
\end{eqnarray*}
Similarly, we obtain \eqref{lem7-psi2m}.  
Which completes the proof of our theorems. 
%%%%%%%%%%%%%%%%%%%%%%%%%%%%%%%%%%%%%%%%%%%%%%%%%%%%%%%%%%%%%%%%%%%%%%%%%%%%%%%%%%%%%%%%%%%%%%%%%%%%%%%
\paragraph*{Acknowledgment.}
The second author is supported by the Austrian Science
Fund (FWF) : Projects F5507-N26, and F5505-N26 which are parts
of the special Research Program  `` Quasi Monte
Carlo Methods : Theory and Application''. Part of this work was done while the second author was supported by the Japan Society for the Promotion of Science (JSPS).  ``Overseas researcher under Postdoctoral Fellowship of JSPS".
%%%%%%%%%%%%%%%%%%%%%%%%%%%%%%%%%%%%%%%%%%%%%%%%%%%%%%%%%%%%%%%%%%%%%%%%%%%%%%%%%%%%%%%%%%%%%%%%%%%%%%%%%%%%%%%%%%%%%%%%%%%%%%%%%%%%%%%%%%%%%%%%%%%%%%%

\medskip\noindent {\footnotesize Isao Kiuchi: Department of Mathematical Sciences, Faculty of Science,
Yamaguchi University, Yoshida 1677-1, Yamaguchi 753-8512, Japan. \\
e-mail: {\tt kiuchi@yamaguchi-u.ac.jp}}

\medskip\noindent {\footnotesize Sumaia Saad Eddin: Institute of Financial Mathematics and Applied Number Theory, Johannes Kepler University, Altenbergerstrasse 69, 4040 Linz, Austria.\\
e-mail: {\tt sumaia.saad\_eddin@jku.at}
     \end{document}